\documentclass[a4paper]{amsart}

\usepackage[T1]{fontenc}
\usepackage[utf8]{inputenc}

\usepackage{amssymb}
\usepackage{amsthm}
\usepackage{amsmath}
\usepackage{amscd}
\usepackage{mathtools}
\usepackage{verbatim}
\usepackage{tikz-cd}
\usepackage{color}
\usepackage{graphicx}
\usepackage{import}


\usepackage[foot]{amsaddr}

\newtheorem{theorem}{Theorem}[section]
\newtheorem{thmx}{Theorem}

\newtheorem{proposition}[theorem]{Proposition}
\newtheorem{lemma}[theorem]{Lemma}

\theoremstyle{remark}

\theoremstyle{remark}
\newtheorem*{notation}{Notation}

\renewcommand{\P}{\mathbb{P}}
\renewcommand{\O}{\mathcal{O}_C}
\newcommand{\OO}{\mathcal{O}}
\newcommand{\SU}{\mathcal{SU}_C(2)}
\newcommand{\SUr}{\mathcal{SU}_C(r)}
\newcommand{\SUw}{\mathcal{SU}_{C_w}(2)}
\newcommand{\genSU}{\mathcal{SU}^{gs}_C(2)}
\newcommand{\PB}{\P_B^{3g-6}}
\newcommand{\PD}{\P_D^{3g-2}}
\newcommand{\PN}{\P_N^{2g-2}}
\newcommand{\PcuatroN}{\P_N^{4}}
\newcommand{\Kum}{\operatorname{Kum}}
\newcommand{\Ext}{\operatorname{Ext}}
\newcommand{\bl}{\operatorname{bl}}

\newcommand{\M}{\mathcal{M}}
\newcommand{\N}{\mathcal{N}}
\renewcommand{\L}{\mathcal{L}}
\newcommand{\I}{\mathcal{I}}

\newcommand{\Sec}{\operatorname{Sec}}
\newcommand{\Hom}{\operatorname{Hom}}
\newcommand{\Zeroes}{\operatorname{Zeroes}}
\newcommand{\Pic}{\operatorname{Pic}}
\newcommand{\Jac}{\operatorname{Jac}}
\newcommand{\Sing}{\operatorname{Sing}}

\newcommand{\MGIT}{\M_{0,2g}^{\operatorname{GIT}}}
\newcommand{\MGITsix}{\M_{0,6}^{\operatorname{GIT}}}

\newcommand{\tto}{\dashrightarrow}
\newcommand{\p}{\varphi_D}
\newcommand{\pL}{\varphi_L}
\newcommand{\pPN}{\varphi_{D,N}}
\newcommand{\pPcuatroN}{\varphi_D|_{\PcuatroN}}
\newcommand{\pPB}{\varphi_D(\PB)}
\newcommand{\pproj}{p_{\P_c}}

\makeatletter
\renewcommand{\paragraph}{%
      \@startsection{paragraph}{4}%
      {\z@}{1ex \@plus 1ex \@minus .2ex}{-1em}%
      {\normalfont\normalsize\bfseries}%
}
\makeatother

\title{Involutions on moduli spaces of vector bundles \\ and GIT quotients}
\author{Néstor Fernández Vargas}
\address{Univ Rennes, CNRS, IRMAR - UMR 6625, F-35000 Rennes, France}
\email{nestor.fernandez-vargas@univ-rennes1.fr}
\thanks{The author gratefully acknowledges support by the Centre Henri Lebesgue (ANR-11-LABX-0020-01) and by the IMAG, Univ Montpellier, CNRS, Montpellier, France}

\subjclass[2010]{Primary 14H60; Secondary 14K25}
\keywords{Hyperelliptic curve, theta functions, moduli spaces}

\begin{document}

\begin{abstract}
  Let $C$ be a hyperelliptic curve of genus $g \geq 3$.
  We give a new description of the theta map for moduli spaces of rank 2 semistable vector bundles with trivial determinant.
  In orther to do this, we describe a fibration of (a birational model of) the moduli space, whose fibers are GIT quotients $(\mathbb{P}^1)^{2g}//\operatorname{PGL(2)}$. Then, we use recent results of Kumar to identify the restriction of the theta map to these GIT quotients with some explicit osculating projection. 
  As a corollary of this construction, we obtain a birational equivalence between the ramification locus of the theta map and a fibration in Kummer $(g-1)$-varieties over $\mathbb{P}^g$.
\end{abstract}

\maketitle

{}\section{Introduction} \label{sec:introduction}
  Let $C$ be a complex smooth curve of genus $g \geq 3$.
  Let $\SUr$ be the moduli space of semistable vector bundles of rank $r$ with trivial determinant on $C$. This moduli space is a normal, projective, unirational variety of dimension $(r^2 - 1)(g - 1)$. The study of the projective structure of the moduli spaces of vector bundles in low rank and genus has produced some beautiful descriptions, frequently meeting constructions issued in the context of classical geometry. 

  For example, in the case of a hyperelliptic curve $C$, Desale and Ramanan \cite{desale_ramanan} characterize the quotient $\SU/i^*$ of the moduli space of rank 2 vector bundles by the map $i^*$ induced by the hyperelliptic involution $i$.
  They show that there exists two quadrics $Q_1$ and $Q_2$ in the $(2g + 1)$-dimensional projective space such that the quotient $\SU/i^*$ is isomorphic to the variety of $g$-dimensional linear subspaces contained in $Q_1$, belonging to a fixed system of maximal isotropic spaces, and intersecting $Q_2$ in quadrics of rank $\leq 4$.
  Some other beautiful results regarding the projective structure of $\SUr$ can be found in \cite{pauly_coble} and \cite{ortega_coble}.

  The natural map $\alpha_{\L}: \SUr \dashrightarrow |\L|^*$ induced by the determinant line bundle $\L$ on $\SUr$ is determined by the $r \Theta$ linear series on the Jacobian variety $\Jac(C)$. More precisely, let $\Pic^{g-1}(C)$ be the Picard variety of divisors of degree $g - 1$ over $C$. For every $E \in \SUr$, let us define
$$\theta(E) := \{ L \in \Pic^{g-1}(C) \ | \ h^0(C, E \otimes L) \not = 0 \}. $$
  If $\theta(E)$ is not equal to $\Pic^{g-1}(C)$, we have that $\theta(E)$ is a divisor in $\Pic^{g-1}(C)$ lying in the linear system $|r \Theta|$, where $\Theta$ is the canonical divisor in  $\Pic^{g-1}(C)$.
  This way, we obtain a rational map
\begin{align*}
  \theta : \SUr \dashrightarrow |r \Theta|
\end{align*} 
which is canonically identified to $\alpha_{\L}$ \cite{beauville_narasimhan_ramanan}.

  Let us now fix $r = 2$. In this setting, the map $\theta$ is a finite morphism \cite{raynaud}. When $g = 2$, the map $\theta$ is an isomorphism onto $\P^3$ \cite{narasimhan_ramanan_moduli}.
  For $g \geq 3$, the map $\theta$ is an embedding if $C$ is non-hyperelliptic, and it is a 2:1 map if $C$ is hyperelliptic \cite{desale_ramanan, beauville_rang2, brivio_verra, vangeemen_izadi} (see Section \ref{sec:moduli_vector_bundles_theta} for more details). 
  If $g \geq 3$, the singular locus $\Sing(\SU)$ is the locus of decomposable bundles $L \oplus L^{-1}$, with $L \in \Jac(C)$. The map $\Jac(C) \to \SU$ defined by $L \to L \oplus L^{-1}$ identifies the Kummer variety of $\Jac(C)$ with the singular locus of $\SU$.

  The goal of this paper is to describe the geometry associated to the map $\theta$ in the case $r = 2$ and $C$ hyperelliptic.
  In the non-hyperelliptic case, the paper \cite{alzati_bolognesi} outlines a connection between the moduli space $\SU$ and the moduli space $\M_{0,n}$ of rational curves with $n$ marked points. A generalization of \cite{alzati_bolognesi} has been given in \cite{bolognesi_brivio}.
  In the present work, the link with the moduli space of curves offers also a new description of the $\theta$-map if $C$ is hyperelliptic.

  Let $C$ be a hyperelliptic curve of genus $g \geq 3$. Let $D$ be an effective divisor of degree $g$ on $C$.
  The first result of the present work is an extension of \cite[Theorem 1.1]{alzati_bolognesi} to the hyperelliptic setting:
\begin{proposition} \label{prop:fibration_in_M}
  There exists a fibration $p_D:\SU \dashrightarrow |2D| \cong \P^g$ whose general fiber is birational to $\M_{0,2g}$. Moreover, we have:
  \begin{enumerate}
    \item For every generic divisor $N \in |2D|$, there exists a $2g$-pointed projective space $\PN$ and a rational dominant map $h_N: \PN \dashrightarrow p_D^{-1}(N)$ such that the fibers of $h_N$ are rational normal curves passing by the $2g$ marked points.
    \item The family of rational normal curves defined by $h_N$ is the universal family of rational curves over the generic fiber $\M_{0,2g}$.
  \end{enumerate}
\end{proposition}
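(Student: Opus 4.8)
The plan is to adapt the strategy of \cite[Theorem 1.1]{alzati_bolognesi} to the hyperelliptic setting, building the fibration $p_D$ out of the extension-class description of semistable bundles. The starting point is that a generic stable bundle $E \in \SU$ with trivial determinant fits into an exact sequence $0 \to L^{-1} \to E \to L \to 0$ for suitable line bundles $L$, so that $E$ is classified by an extension class in $\Ext^1(L, L^{-1}) \cong H^1(C, L^{-2})$. Choosing $L = \OO(D)$ with $D$ effective of degree $g$, Serre duality identifies this $\Ext$ group with $H^0(C, K_C \otimes \OO(2D))^*$, and the divisor $N \in |2D|$ attached to $E$ records the zero locus of the defining section; this is what produces the map $p_D : \SU \dashrightarrow |2D| \cong \P^g$. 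I would first make this assignment precise and check it is a well-defined dominant rational map with the stated target.

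Next I would identify the generic fiber. Fixing $N \in |2D|$ cuts out the sub-locus of extensions whose associated divisor is $N$, and the remaining freedom is the choice of the $2g$ points of $C$ supporting $N$ together with the projective geometry of the extension. The $2g$ points (the support of the degree-$2g$ divisor $N$, accounting for the hyperelliptic structure) are naturally $2g$ marked points, and the moduli of their configuration up to the relevant automorphisms is exactly $\M_{0,2g}$. I would exhibit the projective space $\PN \cong \P^{2g-2}$ as the projectivization of a suitable space of sections and construct $h_N : \PN \dashrightarrow p_D^{-1}(N)$ geometrically, by sending a point of $\PN$ to the bundle obtained from the corresponding extension data. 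The key computation is that two points of $\PN$ give isomorphic bundles precisely when they lie on a common rational normal curve through the $2g$ marked points, which yields part (1): the fibers of $h_N$ are rational normal curves passing through the $2g$ marked points. I expect this identification of the fibers of $h_N$ with rational normal curves to be the main obstacle, since it requires pinning down exactly which elementary transformations or Hecke modifications leave the isomorphism class of $E$ unchanged, and translating that equivalence into the incidence condition of lying on a common rational normal curve.

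For part (2), I would argue that the construction is natural in $N$: letting $N$ vary over the generic locus of $|2D|$, the rational normal curves assemble into a flat family over the base $\M_{0,2g}$, and I would check that its fibers are precisely the curves parametrized by the points of $\M_{0,2g}$ (each such point being a configuration of $2g$ marked points on $\P^1$, through which there passes a unique rational normal curve of the appropriate degree). Verifying the universal property amounts to comparing this family with the tautological family of $2g$-pointed rational curves and showing the classifying map is the identity, which is a formal consequence of the genericity and the dimension count $\dim \M_{0,2g} = 2g - 3$ matching the dimension of the generic fiber of $p_D$. Throughout, the hyperelliptic involution $i$ and the resulting symmetry of the $2g$ marked points (paired by $i$) is what distinguishes this argument from the non-hyperelliptic case of \cite{alzati_bolognesi}, and I would keep track of it to ensure the birational model and the map $p_D$ are compatible with the covering $\theta$.
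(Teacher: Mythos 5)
Your overall strategy --- extension classes, a divisor in $|2D|$ attached to $E$, the $2g$-pointed space $\PN$, and rational normal curves as the fibers of $h_N$ --- is the same as the paper's (both adapt \cite{alzati_bolognesi}), but as written the proposal has two genuine gaps. First, $p_D$ is not well defined the way you describe it: a single extension $(e)\in\PD$ determines a hyperplane in $|K+2D|$, not a divisor in $|2D|$, and the inclusion $\O(-D)\hookrightarrow E_e$ is a subbundle map with no zero locus. The invariant actually used is $\Delta(E)=\Zeroes(s_1\wedge s_2)$ for a basis $s_1,s_2$ of $H^0(C,E\otimes\O(D))$, which presupposes that this space is $2$-dimensional for general $E$ (Proposition \ref{prop:exceptional_fibers} and the definition of $\genSU$); the paper then realizes $p_D$ as $\pproj\circ\theta$, a linear projection of $|2\Theta|$ with center the span $\P_c$ of $\theta(\overline{f_B(\PB)})$, and identifies the target with $|2D|$ via the curve $\widetilde{C}\subset\Pic^{g-1}(C)$ whose span is the annihilator of $\P_c$ (Theorem \ref{th:proyeccion}). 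Some substitute for this is needed to get a dominant map onto $\P^g$.

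Second, and more seriously, the heart of the proposition --- that two points of $\PN$ give the same bundle exactly when they lie on a common rational normal curve through the $2g$ points of $N$, and that the resulting quotient is birational to $\M_{0,2g}$ --- is precisely the step you flag as ``the main obstacle'' and do not carry out. Your remark that the moduli of the configuration of the $2g$ points is $\M_{0,2g}$ is not an argument here: the $2g$ points of $N$ form a fixed divisor on $C$, not a varying configuration; what varies is the rational normal curve through them, and it is the induced configuration on that curve which gives the point of $\M_{0,2g}$. The paper closes this gap with two specific inputs: (i) the Lange--Narasimhan criterion \cite[Proposition 1.1]{lange_narasimhan}, showing that $(e)\in\PN$ iff $E_e$ admits a section $\alpha$ with $\Zeroes(\pi_e\circ\alpha)=N$, whence the whole fiber $f_D^{-1}(E)$ lies in $\PN$ when $\Delta(E)=N$ (Proposition \ref{prop:fibre=image}); and (ii) the explicit definition of $h_N$ by the linear system $|\I_{\Sec^{g-2}(N)}(g)|$ together with \cite[Theorem 4.3 and Proposition 4.5]{bolognesi_BLMS}, which identify the image of $h_N$ with $\MGIT$ and its fibers with the rational normal curves through $N$ (Theorem \ref{th:RNC}); matching the fibers of $h_N$ with those of $f_D|_{\PN}$ is then a Stein-factorization/degree argument. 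Without importing or reproving these facts, the proposal does not establish the statement.
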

  
  The $2g$-pointed space $\PN$ appears naturally as a classifying space for certain extension classes. More precisely, consider extensions
\begin{align*}
(e) \quad  0 \to \O(-D) \to E_e \to \O(D) \to 0.
\end{align*}
  These are classified by the projective space $$\PD := \P \Ext^1(\O(D), \O(-D)) = |K + 2D|^*,$$ where $K$ is the canonical divisor on $C$.
  Since the divisor $K + 2D$ is very ample, the linear system $|K + 2D|$ embeds the curve $C$ in $\PD$. The projective space $\PN$ is defined as the span in $\PD$ of the $2g$ marked points $p_1, \ldots, p_{2g}$ on $C$ defined by the effective divisor $N \in |2D|$.

  Our aim is to describe the map $\theta$ restricted to the generic fibers of the fibration $p_D$. To this end, the following construction is crucial:

  Let $p, i(p)$ be two involution-conjugate points in $C$; and consider the line $l \subset \PD$ secant to $C$ and passing through $p$ and $i(p)$. We show that this line intersects the subspace $\PN$ in a point. Moreover, the locus $\Gamma \subset \PN$ of these intersections as we vary $p, i(p)$ is a rational normal curve passing by the points $p_1, \ldots, p_{2g}$.
  It follows from Proposition \ref{prop:fibration_in_M}, the map $h_N$ contracts the curve $\Gamma$ onto a point $P \in p_D^{-1}(N) \cong \M_{0,2g}$.

  In the article \cite{kumar}, Kumar defines the linear system $\Omega$ of $(g-1)$-forms on $\P^{2g - 3}$ vanishing with multiplicity $g-2$ at $2g - 1$ general points. He shows that $\Gamma$ induces a birational map $i_{\Omega}: \P^{2g - 3} \dashrightarrow \MGIT$ onto the GIT quotient of the moduli space $\M_{0,2g}$. The partial linear system $\Lambda \subset \Omega$ of forms vanishing with multiplicity $g - 2$ at an additional general point $e \in \P^{2g - 3}$ induces a rational projection $\kappa : \MGIT \dashrightarrow | \Lambda |^*$. In particular, $\kappa$ is an osculating projection centered on the point $w = i_{\Omega}(e)$. Kumar also shows that the map $\kappa$ is 2:1. We describe birationally the restrictions of $\theta$ to the fibers $p_D^{-1}(N)$ using Kumar's map: 
\begin{thmx} \label{th:theta_is_kappa}
  The map $\theta$ restricted to the fibers $p_D^{-1}(N)$ is Kumar's osculating projection $\kappa$ centered at the point $P = h_N(\Gamma)$, up to composition with a birational map.
\end{thmx}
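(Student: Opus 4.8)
The plan is to treat both $\theta|_{p_D^{-1}(N)}$ and $\kappa$ as $2{:}1$ covers and to compare their covering involutions on a common birational model. The guiding principle is that a dominant rational map of degree $2$ is determined, up to composition with a birational map of its target, by the involution of the source that interchanges the two sheets; hence, once the sources are identified, it suffices to match these involutions. The common model is already at hand: Proposition~\ref{prop:fibration_in_M} identifies $p_D^{-1}(N)$ birationally with $\M_{0,2g}$ via $h_N$, and Kumar's map $i_{\Omega}$ identifies $\M_{0,2g}$ birationally with $\MGIT$. I will therefore transport $\kappa$ to $\M_{0,2g}$ through $i_{\Omega}$ and compare it with $\theta|_{p_D^{-1}(N)}$ there. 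Both maps are indeed of degree $2$: the map $\kappa$ by Kumar's theorem, and $\theta|_{p_D^{-1}(N)}$ because $\theta$ is globally $2{:}1$ on the hyperelliptic $\SU$, with covering involution $i^*$, which (as I will check) preserves the fibration $p_D$ and so restricts to each generic fiber.

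The next step is to locate the center and describe the $\theta$-side involution explicitly. By construction the fibers of $h_N$ are the rational normal curves through $p_1, \dots, p_{2g}$, so a point of $p_D^{-1}(N)$ records the cross-ratio data of these $2g$ points on the corresponding curve. Since $h_N$ contracts the hyperelliptic secant curve $\Gamma$, the distinguished point $P = h_N(\Gamma)$ is the configuration read off on $\Gamma$; and because the secant line through $p$ and $i(p)$ meets $\PN$ at the point of $\Gamma$ over the class of $p$, this configuration is exactly the image of $N$ under the hyperelliptic quotient $C \to C/i$. Choosing Kumar's auxiliary point as $e = i_{\Omega}^{-1}(P)$, which is general for generic $N$, places the osculating center $w = i_{\Omega}(e)$ at $P$, so the two maps at least share their center. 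It remains to show that the sheet-interchanging involution $i^*$ of $\theta|_{p_D^{-1}(N)}$ coincides with the covering involution of the osculating projection $\kappa$ centered at $P$.

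This last identification is the heart of the matter and the main obstacle. I expect to argue it by pulling the linear system $|2\Theta|$ back along $\theta \circ h_N$ to $\PN$: since this composite is constant on the rational normal curve fibers, the resulting hypersurfaces are swept out by those curves and therefore vanish to prescribed order at the marked points $p_1, \dots, p_{2g}$ and along $\Gamma$. The task is to control the base locus of this pulled-back system and to read off its multiplicities, so as to recognize it, after collapsing the fibers to pass from $\PN$ to $\P^{2g-3}$, as Kumar's system $\Lambda$ of $(g-1)$-forms with multiplicity $g-2$ at the marked points and at the extra point over $P$. Identifying the systems forces the osculating structures at $P = w$ to agree, not merely the centers, and hence shows that $i^*$ is the covering involution of $\kappa$. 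Alternatively, one can characterize $i^*$ directly as the unique nontrivial involution of $\M_{0,2g}$ compatible with the marked configuration and fixing $P$, and match it with Kumar's involution; either way the delicate work is the multiplicity bookkeeping along $\Gamma$. Once the two involutions are identified, the uniqueness of degree-$2$ maps with a prescribed covering involution yields the stated equality up to a birational map of the target.
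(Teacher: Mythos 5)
Your overall architecture is the right one and in fact mirrors the paper's: identify $p_D^{-1}(N)$ with $\MGIT$ via $h_N$, match the center $P=h_N(\Gamma)$ with Kumar's point $w$, and then conclude by a uniqueness argument for degree-$2$ maps (the paper phrases this as uniqueness of the Stein factorization of $\widetilde{\p}=\theta\circ\widetilde{f_D}$ versus $l_N\circ\pi_N\circ h_N$, which is equivalent to your covering-involution principle). Your identification of $P$ with the configuration of $N$ on $\Gamma$, i.e.\ with the branch data of $C\to C/i$, is also correct.

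However, the decisive step is left as a declared ``task'' rather than proved, and this is precisely where the content of the theorem lies. What must be shown is that the linear system $|\I_{\Sec^N}(g)|$ cutting out $\pPN$ on $\PN$ vanishes to order exactly $g-2$ along $\Gamma$, so that after descending through $h_N$ it becomes the system of hyperplane sections of $\MGIT$ with a point of multiplicity $g-2$ at $P$ --- which is Kumar's $\Lambda$. Your proposal gives no mechanism for producing these multiplicities: the observation that the pulled-back divisors are swept out by the rational normal curve fibers only says they are pullbacks from the base, not that they osculate to order $g-2$ at the marked points or along $\Gamma$. The missing input is Bertram's theorem, which identifies $\p$ with the system $|\I_C^{g-1}(g)|$ of degree-$g$ forms vanishing to order $g-1$ on $C\subset\PD$; combined with the elementary fact that a degree-$r$ form vanishing to orders $l_1,l_2$ at two points vanishes to order $l_1+l_2-r$ on the line joining them, applied to the hyperelliptic secants $\langle p, i(p)\rangle$ that sweep out $\Gamma$, one gets vanishing of order $(g-1)+(g-1)-g=g-2$ along $\Gamma$. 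Without this, the center of your two projections is matched but not their osculating order, and the two covering involutions cannot be compared. Your fallback route --- that $i^*$ is ``the unique nontrivial involution of $\M_{0,2g}$ compatible with the marked configuration and fixing $P$'' --- is not a valid substitute: there is no such uniqueness statement, and nothing in the proposal singles out Kumar's involution among involutions fixing $P$. Finally, note that the composite you propose to pull back along, $\theta\circ h_N$, is not defined ($h_N$ lands in $\MGIT$, not in $\SU$); the map whose linear system must be analyzed is $\pPN=\theta\circ f_D|_{\PN}$.
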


  Furthermore, Kumar shows that the image of $\kappa$ is a connected component of the moduli space $\SUw$, where $C_w$ is the hyperelliptic 2:1 cover of $\P^1$ ramifying over the $2g$ points defined by $w$. He also proves that the ramification locus of the map $\kappa$ is the Kummer variety $\Sing(\SU) = \Kum(C_w) \subset \SUw$.
  These results due to Kumar combined with Theorem \ref{th:theta_is_kappa} allow us to describe the ramification locus of the map $\theta$:
\begin{thmx}
  The ramification locus of the map $\theta$ is birational to a fibration over $|2D| \cong \P^g$ in Kummer varieties of dimension $g-1$.
\end{thmx}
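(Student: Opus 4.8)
The plan is to reduce the global statement to the fiberwise description furnished by Theorem \ref{th:theta_is_kappa}, and then to assemble the resulting family of ramification loci over $|2D| \cong \P^g$. First I would record that $\theta$ is a finite, generically $2{:}1$ morphism, so that it admits a birational deck involution $\sigma$ with $\theta \circ \sigma = \theta$, and its ramification locus $R$ is the closure of $\mathrm{Fix}(\sigma)$. The first key point is that $\sigma$ preserves the fibration $p_D$. Indeed, by Theorem \ref{th:theta_is_kappa} the restriction $\theta|_{p_D^{-1}(N)}$ coincides birationally with Kumar's map $\kappa$, which is itself $2{:}1$ by \cite{kumar}. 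Hence, for generic $N$ and generic $E \in p_D^{-1}(N)$, the point $\theta(E)$ already has a second $\theta$-preimage $E'$ inside $p_D^{-1}(N)$; since $\theta$ is globally $2{:}1$, this $E'$ is the unique other preimage, so $\sigma(E) = E' \in p_D^{-1}(N)$. Taking closures, $p_D \circ \sigma = p_D$.

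Granting this, the ramification locus decomposes cleanly along the fibration. For generic $N$ one has, as a consequence of $\sigma$-invariance of the fibers, the equality $R \cap p_D^{-1}(N) = \mathrm{Fix}\bigl(\sigma|_{p_D^{-1}(N)}\bigr)$, and the right-hand side is exactly the ramification locus of the $2{:}1$ map $\theta|_{p_D^{-1}(N)}$. By Theorem \ref{th:theta_is_kappa} this equals, under the birational identification of $p_D^{-1}(N)$ with $\MGIT$, the ramification locus of $\kappa$; and Kumar proves that the latter is the Kummer variety $\Kum(C_w) \subset \SUw$ of dimension $g-1$. Thus every generic fiber of the restricted projection $p_D|_R$ is birational to a Kummer $(g-1)$-variety.

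It then remains to globalize this fiberwise picture into a genuine fibration. The point is that all the data entering Theorem \ref{th:theta_is_kappa} — the $2g$-pointed spaces $\PN$, the contracted rational normal curves $\Gamma$, the maps $h_N$, and the osculating centers $P = h_N(\Gamma)$ — vary algebraically with $N$ over a dense open subset $U \subseteq |2D|$. Consequently the birational identifications of the fibers with $\MGIT$, and with Kumar's target $\SUw$, fit together into a birational equivalence of total spaces over $U$. Restricting this equivalence to ramification loci yields a birational equivalence between $R \cap p_D^{-1}(U)$ and the total space of the corresponding family of Kummer varieties over $U$, whence $R$ is birational to a fibration over $\P^g$ in Kummer $(g-1)$-varieties, as claimed.

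The main obstacle I expect lies precisely in this last globalization step: Theorem \ref{th:theta_is_kappa} is stated fiber by fiber and only up to composition with a birational map, so one must verify that these birational maps can be chosen to depend algebraically on $N$ and thereby glue into a single birational map of total spaces respecting $p_D$. A secondary technical point is the fiberwise equality $R \cap p_D^{-1}(N) = \mathrm{Fix}\bigl(\sigma|_{p_D^{-1}(N)}\bigr)$ for generic $N$: a priori one only has an inclusion of the restriction of the ramification into the ramification of the restriction, and it is exactly the deck-involution description, together with the fact that $\sigma$ stabilizes the fibers, that upgrades this to an equality and thereby pins down the fibers of $p_D|_R$ as the Kummer varieties.
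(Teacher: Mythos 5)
Your proposal is correct and follows essentially the same route as the paper: restrict $\theta$ to the fibers of the fibration over $|2D| \cong \P^g$, identify the restricted map birationally with Kumar's $2{:}1$ map $\kappa$ whose ramification locus is the Kummer variety $\Kum(C_w)$ of dimension $g-1$, and let $N$ vary. The paper derives the theorem in exactly this way but states the deduction very tersely, so your explicit deck-involution argument for why the fibers are $\sigma$-stable and your flagging of the algebraic-in-$N$ globalization step are elaborations of, not departures from, the paper's argument.
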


\medskip

  The fundamental tools in our arguments are the classification maps 
\begin{align*}
  f_L : \P \Ext^1(L^{-1},L) \dashrightarrow \SU,
\end{align*}
where $L$ is a line bundle (in this paper, we will use $L = \O(-D)$). More precisely, the map $f_L$ associates to the equivalence class of a extension $(e)$ the vector bundle $E_e \in \SU$ sitting in the sequence. Consequently, the base locus of $f_L$ is the locus of unstable classes in $\P \Ext^1(L^{-1},L)$. 

  The use of classification maps constitutes a classical approach to the study of moduli spaces of vector bundes. For example, they have been used by Atiyah \cite{atiyahvectorbundles} to study vector bundles over elliptic curves, and by Newstead \cite{newstead_stable} to study the moduli space of rank 2 semistable vector bundles with odd determinant in the case $g = 2$. 
  In \cite{alzati_bolognesi} and \cite{bolognesi_conicbundle}, they have been used to study the moduli space $\SU$ when $C$ is a curve of genus $g \geq 2$, non-hyperelliptic if $g > 2$.

  We conclude by giving account of the situation in low genus. Let $\pL := \theta \circ f_L$. A precise examination of the base locus of the restriction map $\pL|_{\PN}$ leads to the following result:
\begin{thmx}
  Let $C$ be a hyperelliptic curve of genus 3, 4 or 5. Then, for generic $N$, the restriction of $\pL$ to the subspace $\PN$ is exactly the composition $\kappa \circ h_N$.
\end{thmx}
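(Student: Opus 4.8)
The plan is to realize both $\pL|_{\PN}$ and $\kappa \circ h_N$ as rational maps given by explicit linear systems on $\PN$, and to prove that these two systems coincide; the genus hypothesis will enter only in the final base-locus bookkeeping. First I would record the structural reduction. By construction of the fibration in Proposition~\ref{prop:fibration_in_M}, the restriction $f_L|_{\PN}$ factors as $\iota \circ h_N$, where $\iota : p_D^{-1}(N) \hookrightarrow \SU$ is the inclusion of the fiber and the fibers of $f_L|_{\PN}$ are exactly the rational normal curves contracted by $h_N$. Hence
\begin{align*}
  \pL|_{\PN} = \theta|_{p_D^{-1}(N)} \circ h_N.
\end{align*}
By Theorem~\ref{th:theta_is_kappa} we already know $\theta|_{p_D^{-1}(N)} = \kappa$ up to composition with a birational map, so the whole content of the statement is that this birational correction is the identity for generic $N$ when $g \in \{3,4,5\}$. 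Equivalently, writing $W := (\theta \circ f_L)^*|2\Theta|\big|_{\PN}$ for the linear system defining $\pL|_{\PN}$ and $V$ for the system defining $\kappa \circ h_N$, it suffices to prove $W = V$ as linear subsystems on $\PN$.

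Next I would make $V$ explicit. Using Kumar's birational identification $i_{\Omega}$ and the description of $\kappa$ as the osculating projection given by the partial system $\Lambda$ of degree $(g-1)$ forms vanishing to order $g-2$ at the $2g$ points (the $2g-1$ points together with $e = i_{\Omega}^{-1}(w)$), I would transport $\Lambda$ to $\PN$ through $h_N$. Since $h_N$ contracts $\Gamma$ to the center $P = h_N(\Gamma)$ and carries the $2g$ marked points to the boundary configuration, this yields a concrete description of $V$ whose base locus is supported on the points $p_1, \dots, p_{2g}$ and along the rational normal curve $\Gamma$, with multiplicities dictated by the order $g-2$ of vanishing. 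I would then compute $W$ directly: its base locus is the union of the unstable locus of $f_L$ inside $\PN$ and the preimage under $f_L$ of the indeterminacy of $\theta$, i.e. the classes whose bundle $E_e$ has $\theta(E_e) = \Pic^{g-1}(C)$. Stratifying $C \subset \PD$ by its secant varieties, these classes meeting $\PN$ are read off from the intersections $\Sec_k(C) \cap \PN$: the curve $C$ itself meets $\PN$ in the $2g$ points $p_i$, and the family of involution-conjugate secant lines meets $\PN$ exactly along $\Gamma$, so these two pieces contribute precisely the base conditions appearing in $V$.

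Matching the induced multiplicities and comparing projective dimensions then gives $W = V$, hence $\pL|_{\PN} = \kappa \circ h_N$. The crux, and the point where the hypothesis $g \le 5$ is unavoidable, is controlling the higher-secant contributions to the base locus of $W$. A priori the intersections $\Sec_k(C) \cap \PN$ for $k \ge 3$ could produce base components of $\pL|_{\PN}$ absent from $V$, which would force a genuinely nontrivial birational correction in Theorem~\ref{th:theta_is_kappa}. Since $\dim \Sec_k(C) = 2k-1$ and $\PN$ has codimension $g$ in $\PD$, the expected dimension of $\Sec_k(C) \cap \PN$ is $2k-1-g$; a case analysis shows that for $g \in \{3,4,5\}$ and generic $N$ the only strata meeting $\PN$ in the relevant codimension are $C$ and the involution-secants already accounted for, whereas for $g \ge 6$ genuinely new components appear. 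Carrying out this secant dimension count carefully — in particular verifying that the special position of the $p_i$ and of $\Gamma$ creates no unexpected excess intersection — is the main technical obstacle; once it is settled, the equality $W = V$ follows and the theorem is proved.
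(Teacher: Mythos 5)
Your overall architecture is the same as the paper's: you reduce the statement to the identity of two linear systems on $\PN$, namely the system $|\I_{\Sec^N}(g)|$ defining $\pPN$ (Lemma \ref{lem:base_locus}) and the system $\N_2$ defining $\pi_N\circ h_N=\kappa\circ h_N$, so that the theorem amounts to showing that the further base locus ${\Sec^N}'=\Sec^N\setminus\{\Gamma\cup\Sec^{g-2}(N)\}$ contributes nothing for $g\le 5$. The gap is in the mechanism you propose for that last step. An expected-dimension count for $\Sec^{g-2}(C)\cap\PN$ cannot carry the argument, because on a hyperelliptic curve this intersection is always badly excess-dimensional: already for $g=3$ the expected dimension of $\Sec^1(C)\cap\P^4_N$ is $0$, yet the intersection contains the fifteen lines of $\Sec^1(N)$ and the curve $\Gamma$. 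These excess components are not ``unexpected'' accidents of position that genericity of $N$ could remove; they are forced by the special linear series on $C$, and the hyperelliptic pencil produces such series in every degree and every genus. So the dichotomy between $g\le 5$ and $g\ge 6$ is invisible to a dimension count of the type you describe, and verifying ``no unexpected excess intersection'' is not a technical chore to be deferred --- it is false as stated.

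What actually decides the question is Lemma \ref{lemma:nonempty_intersection}: the span $\langle L_{g-1}\rangle$ of an effective divisor $L_{g-1}$ of degree $g-1$ not contained in $N$ meets $\PN$ if and only if $\dim|L_{g-1}|\ge 1$, and then in a linear space of dimension $\dim|L_{g-1}|-1$. The paper combines this with the classification of special divisors on a hyperelliptic curve (every $L_{g-1}$ with $\dim|L_{g-1}|\ge1$ is of the form $kh$ plus fixed points) and checks, for $g=3,4,5$, that the relevant divisors have $\dim|L_{g-1}|=1$ and contain $h=p+i(p)$, so that $\langle L_{g-1}\rangle\cap\PN$ is a single point which already lies on $\Gamma$, because $\langle L_{g-1}\rangle$ contains the line $\langle p,i(p)\rangle$ whose trace on $\PN$ is by definition a point of $\Gamma$. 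For $g=6$ the divisors $2h+r$ have $\dim|2h+r|=2$ and produce secant lines of $\Gamma$, which is exactly why the statement stops at $g=5$. Your proposal would need to be rebuilt around this Brill--Noether input; as it stands, the secant dimension count you identify as the main obstacle is not merely difficult, it is the wrong invariant.
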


  \begin{notation} Let $\P^n = \P(\mathbb{C}^{n+1})$ denote the $n$-dimensional complex projective space.
  Throughout this paper, a form $F$ of degree $r$ on $\P^n$ will denote element of the vector space $H^0(\P^n, \mathcal{O}_{\P^n}(r)) = \operatorname{Sym}^r(\mathbb{C}^{n+1})^*$. 
  If we fix a basis $x_0, \ldots, x_n$ of $(\mathbb{C}^{n+1})^*$, $F$ is simply a homogeneous polynomial of degree $r$ on $x_0, \ldots, x_n$.
\end{notation}
\paragraph{Acknowledgements}
  This work is part of my PhD thesis. I would like to deeply thank my PhD advisors Michele Bolognesi and Frank Loray for their great support and guidance.

{}\section{Moduli of vector bundles} \label{sec:moduli}
  
  We briefly recall here some results about moduli of vector bundles. For a more detailed reference, see \cite{beauville_vectorbundles}.

\subsection{Moduli of vector bundles and the map $\theta$} \label{sec:moduli_vector_bundles_theta}

  Let $C$ be a smooth genus $g$ algebraic curve with $g \geq 2$. 
  Let us denote by $\Pic^d(C)$ the Picard variety of degree $d$ line bundles on $C$. The Jacobian of C is $\Jac(C) = \Pic^0(C)$. The canonical divisor $\Theta \subset \Pic^{g-1}(C)$ is defined set-theoretically as
$$\Theta := \{ L \in \Pic^{g-1}(C) \ | \ h^0(C,L) \not = 0 \}.$$
  Let $\SU$ be the moduli space of semistable rank 2 vector bundles on $C$ with trivial determinant. This variety parametrizes S-equivalence classes of such vector bundles, where the S-equivalence relation is defined as follows: every semistable vector bundle $E$ admits a \emph{Jordan-Hölder filtration}
$$ 0 = E_0 \subsetneq E_1 \subsetneq E_2 = E$$
such that the quotients $E_1 = E_1/E_0$ and $E_2/E_1$ are stable of slope equal to the slope of $E$. The vector bundle  $$\mbox{gr}(E) := E_1 \oplus (E_2/E_1)$$ is the \emph{graded bundle} associated to $E$. By definition, two semistable vector bundles $E$ and $E'$ on $C$ are S-equivalent if $\mbox{gr} (E) \cong \mbox{gr} (E')$. In particular, two stable bundles are S-equivalent if and only if they are isomorphic.

  The Picard group $\Pic(\SU)$ is isomorphic to $\mathbb{Z}$, and it is generated by a line bundle $\mathcal{L}$ called the \emph{determinant bundle} \cite{drezet_narasimhan}. For every $E \in \SU$, let us define the \emph{theta divisor}
$$\theta(E) := \{ L \in \Pic^{g-1}(C) \ | \ h^0(C, E \otimes L) \not = 0 \}. $$

  In the rank 2 case, $\theta(E)$ is a divisor in the linear system $|2 \Theta| \cong \P^{2^g - 1}$ (this is not true in general in higher rank), which leads to the definition of the theta map $$\theta : \SU \longrightarrow |2 \Theta|.$$

  If $\theta$ is a morphism, then we have that  $\theta$ is finite. 
  Indeed, since the linear system $|\L|$ is ample, the map $\theta$ cannot contract any curve.
  It is known that the map $\theta$ is a morphism if $r = 2$; $r = 3$ and $g = 2$ or $3$; or $r = 3$ and $C$ is generic \cite{raynaud}.
  The map $\theta$ is not a morphism if $r \gg 0$ \cite{raynaud,casalaina-martin_gwena_teixidor}, and it is generically injective for $C$ general and $g \gg r$ \cite{brivio_verra_plucker}.

  Let us now fix $r = 2$. In particular, the map $\theta$ is a finite morphism. If $C$ is not hyperelliptic, $\theta$ is known to be an embedding \cite{brivio_verra, vangeemen_izadi}. This is also the case in genus 2, where $\theta$ is actually an isomorphism onto $\P^3$ \cite{narasimhan_ramanan_moduli}. If $C$ is hyperelliptic of genus $g \geq 3$, we have that $\theta$ factors through the involution $$E \mapsto i^*E$$ induced by the hyperelliptic involution $i$, embedding the quotient $\SU/i^*$ into $|2 \Theta|$ \cite{desale_ramanan, beauville_rang2}. 

\subsection{The classifying maps} \label{sec:classifying_maps}
  Let $D$ be a general degree $g$ effective divisor on $C$. Let us consider isomorphism classes of extensions
\begin{align*}
(e) \quad  0 \to \O(-D) \to E_e \to \O(D) \to 0.
\end{align*}
  These extensions are classified by the $(3g - 2)$-dimensional projective space
\begin{align*}
  \PD := \P \Ext^1(\O(D), \O(-D)) = |K + 2D|^*,
\end{align*}
where $K$ is the canonical divisor of $C$.
  The divisor $K + 2D$ is very ample and embeds $C$ as a degree $4g - 2$ curve in $\PD$. 
  Let us define the rational surjective \emph{extension map}
\begin{align*}
  f_D : \PD  &\tto \SU 
\end{align*}
which sends the extension class $(e)$ to the vector bundle $E_e$. The composition map 
\begin{align*}
  \p : = \theta \circ f_D : \PD \tto | 2 \Theta |
\end{align*}
has been described by Bertram in \cite{bertram}. 
  More precisely, Theorem 2 of \cite{bertram} gives an isomorphism
\begin{align} 
  H^0(\SU, \mathcal{L}) \cong H^0(\PD, \I_C^{g-1}(g)),
\end{align}
where $\I_C$ is the ideal sheaf of $C$. 
  Consequently, we have the following characterization:
\begin{theorem}[Bertram \cite{bertram}] \label{th:bertram}
  The map $\p$ is given by the linear system $|\I_C^{g-1}(g)|$ of forms of degree $g$ vanishing with multiplicity at least $g - 1$ on $C$. 
\end{theorem}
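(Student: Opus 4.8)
My plan is to describe $\p$ by pulling back the hyperplane sections of $|2\Theta|$ through $f_D$, to show that each such pullback is a degree-$g$ form singular of order $g-1$ along $C$, and then to upgrade the resulting sub-system to all of $|\I_C^{g-1}(g)|$ by the dimension count contained in the isomorphism (1). I would begin with the pullback of a single theta section. For generic $L\in\Pic^{g-1}(C)$ the locus $\{E : h^0(E\otimes L)\neq 0\}$ is the divisor of a section $s_L\in H^0(\SU,\L)$, and as $L$ varies these sections span $H^0(\SU,\L)$ (a nonzero $\sigma\in H^0(2\Theta)$ cannot vanish at every $L$, so the functionals $\operatorname{ev}_L$ lie on no common hyperplane of $|2\Theta|$). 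Tensoring the extension $(e)$ by $L$ and taking cohomology yields, using $h^0(\O(-D)\otimes L)=0$ (the bundle has degree $-1$), a connecting map
\begin{equation*}
  \delta_e : H^0(\O(D)\otimes L) \longrightarrow H^1(\O(-D)\otimes L)
\end{equation*}
given by cup product with $e$, with $h^0(E_e\otimes L)=\dim\ker\delta_e$. A Riemann--Roch computation shows both sides are $g$-dimensional for generic $L$, so $\delta_e$ is a square matrix depending linearly on $e$, and $f_D^*s_L$ is the degree-$g$ form $\{\det\delta_e=0\}$ on $\PD$. This already exhibits $\p$ as given by forms of degree $g$.

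The crux is the multiplicity of these forms along $C$. Via the Serre duality $\Ext^1(\O(D),\O(-D))=H^0(K+2D)^*$ underlying the identification $\PD=|K+2D|^*$, a point $p\in C\subset\PD$ is the evaluation functional $\operatorname{ev}_p$. Dualizing the cup product, $\delta_e$ is represented by the bilinear form
\begin{equation*}
  H^0(K+D-L)\times H^0(\O(D)\otimes L)\longrightarrow \mathbb{C},\qquad (t,s)\longmapsto e(t\cdot s),
\end{equation*}
built from the multiplication $t\cdot s\in H^0(K+2D)$. At $e=\operatorname{ev}_p$ this becomes $(t,s)\mapsto t(p)\,s(p)$, the product of two evaluation functionals, hence of rank $1$; thus $\delta_{\operatorname{ev}_p}$ has corank $g-1$. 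For a matrix depending linearly on $e$, a point of corank $g-1$ forces $\det\delta_e$ to vanish to order at least $g-1$ (by a Schur-complement expansion on the $g-1$ vanishing directions). Therefore $f_D^*s_L\in H^0(\PD,\I_C^{g-1}(g))$, and since the $s_L$ span, $f_D^*H^0(\SU,\L)\subseteq H^0(\PD,\I_C^{g-1}(g))$.

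To finish, I would compare dimensions. As $f_D$ is dominant, $f_D^*$ is injective, so $f_D^*H^0(\SU,\L)$ has dimension $\dim H^0(\SU,\L)=2^g$. The isomorphism (1) gives $\dim H^0(\PD,\I_C^{g-1}(g))=2^g$ as well, so the inclusion of the previous paragraph is an equality and $\p$ is given by the full linear system $|\I_C^{g-1}(g)|$.

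I expect the multiplicity computation to be the main obstacle: making the Serre-duality identification of $\delta_e$ with multiplication-then-evaluation fully precise, and verifying that the rank of $\delta_e$ drops to exactly $1$ along all of $C$ (so that the order of vanishing is the required $g-1$ and not smaller at special points), which in turn rests on the base-point-freeness of $|\O(D)\otimes L|$ and $|K+D-L|$ for generic $L$. The surjectivity step is then immediate given (1), whose own proof --- the computation $h^0(\PD,\I_C^{g-1}(g))=2^g$ --- is the genuinely deep ingredient, but is available to us as a quoted result.
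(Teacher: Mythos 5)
Your proposal is correct, but it does substantially more than the paper, which offers no proof of this statement at all: the theorem is quoted from Bertram, and the paper derives it in one line from the displayed isomorphism $H^0(\SU,\L)\cong H^0(\PD,\I_C^{g-1}(g))$, tacitly using that Bertram's isomorphism is the one induced by pullback along $f_D$. You instead reconstruct the substance of Bertram's original argument: the determinantal description $f_D^*s_L=\det\delta_e$ with $\delta_e$ a $g\times g$ matrix linear in $e$ (your Riemann--Roch counts are right, and $h^0(\O(-D)\otimes L)=0$ holds for \emph{every} $L\in\Pic^{g-1}(C)$ since the degree is $-1$), the rank-one degeneration of the cup-product pairing $(t,s)\mapsto e(t\cdot s)$ at $e=\operatorname{ev}_p$, and the resulting order-$(g-1)$ vanishing of the determinant along $C$; the quoted isomorphism (1) is then used only as a dimension count $2^g=2^g$ to upgrade the inclusion to an equality of linear systems. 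What this buys is a self-contained justification of the paper's ``consequently,'' at the cost of re-proving what the citation already contains. Two small remarks. First, the difficulty you flag at the end points in the wrong direction: if the rank of $\delta_{\operatorname{ev}_p}$ drops below $1$ at special points of $C$, the determinant vanishes there to order $\geq g$, which is harmless because the theorem asserts multiplicity \emph{at least} $g-1$; all you need is rank $\leq 1$, which your factorization through $(t,s)\mapsto t(p)s(p)$ gives at every point, together with the standard fact that for a smooth curve in a smooth variety, vanishing to order $\geq g-1$ at every point of $C$ is membership in $\I_C^{g-1}$. Second, you should record that $\det\delta_e\not\equiv 0$ (for instance because $f_D$ is dominant and a generic stable $E$ satisfies $h^0(E\otimes L)=0$ for generic $L$), and that $\operatorname{div}(f_D^*s_L)$ agrees with the determinantal divisor as divisors rather than just set-theoretically; both points are standard and neither threatens the argument.
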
 
  Let us denote by $\Sec^n(C)$ the variety of $(n + 1)$-secant $n$-planes on $C$.
  We have that the singular locus of $\Sec^{n+1}(C)$ is the secant variety $\Sec^{n}(C)$ for every $n$.
  The linear system $|\I_C^{g-1}(g)|$ is characterized as follows:

\begin{proposition}[\protect{\cite[Lemma 2.5]{alzati_bolognesi}}] \label{prop:linear_systems_coincide}
  The linear system $|\I_C^{g-1}(g)|$ and $|\I_{\Sec^{g-2} (C)}(g)|$ on $\PD$ are the same.
\end{proposition}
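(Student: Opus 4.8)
The plan is to show that the two subsystems of degree-$g$ forms on $\PD$ coincide by proving both inclusions. Before starting I would record the general-position fact I will use repeatedly: since $C$ is embedded by the complete linear system $|K+2D|$ of degree $4g-2$, any $s \leq 2g-1$ points $r_1,\dots,r_s$ of $C$ are linearly independent, because $\deg(K+2D-\sum r_i) = 4g-2-s \geq 2g-1 > 2g-2$ forces them to impose independent conditions on $|K+2D|$. In particular any $g-1$ points of $C$ span a $(g-2)$-plane, so the general member of $\Sec^{g-2}(C)$ is a secant plane $\Lambda = \langle p_1,\dots,p_{g-1}\rangle \cong \P^{g-2}$ with $p_i \in C$.

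For the inclusion $|\I_C^{g-1}(g)| \subseteq |\I_{\Sec^{g-2}(C)}(g)|$, I would take $F$ vanishing to order $\geq g-1$ along $C$ and restrict it to such a general $\Lambda$. Since restriction to a linear subspace cannot lower the order of vanishing, $F|_\Lambda$ is a degree-$g$ form on $\P^{g-2}$ vanishing to order $\geq g-1$ at each of the $g-1$ points $p_i$. The crux is the elementary fact that such a form must be identically zero: placing the $p_i$ at the coordinate vertices $e_0,\dots,e_{g-2}$, a monomial $x^\alpha$ with $|\alpha|=g$ vanishes to order $g-\alpha_i$ at $e_i$, so vanishing to order $\geq g-1$ at every vertex forces $\alpha_i \leq 1$ for all $i$, hence $|\alpha| \leq g-1 < g$, a contradiction. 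Thus $F|_\Lambda \equiv 0$, so $F$ vanishes on every general secant $(g-2)$-plane and therefore on $\Sec^{g-2}(C)$.

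The reverse inclusion $|\I_{\Sec^{g-2}(C)}(g)| \subseteq |\I_C^{g-1}(g)|$ is the main obstacle. Given $F$ of degree $g$ vanishing on $\Sec^{g-2}(C)$, I must show $\operatorname{mult}_p F \geq g-1$ at a general point $p \in C$. I would introduce the secant cone $S_p \subseteq \Sec^{g-2}(C)$, the union of the planes $\langle p, q_1,\dots,q_{g-2}\rangle$ with $q_j \in C$; it is a cone with vertex $p$ contained in $\{F=0\}$. Projecting from $p$ carries $S_p$ onto $\Sec^{g-3}(C_p)$, where $C_p = \pi_p(C) \subset \P^{3g-3}$ is the nondegenerate projected curve. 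Consequently the tangent cone of $F$ at $p$, cut out by the lowest-degree part $F_m$ with $m = \operatorname{mult}_p F$, contains $\Sec^{g-3}(C_p)$ inside $\P(T_p\PD) = \P^{3g-3}$.

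The remaining point — and the key technical input — is a degree bound for forms through a secant variety, which I would isolate as follows: if a nonzero form $G$ vanishes on $\Sec^k(C')$ of a nondegenerate curve $C'$ but not on $\Sec^{k+1}(C')$, and $k+2$ general points of $C'$ are independent, then $\deg G \geq k+2$. To prove this I would choose $k+2$ general points $p_0,\dots,p_{k+1} \in C'$ spanning $\Pi \cong \P^{k+1}$ with $G|_\Pi \not\equiv 0$; the $k+2$ planes spanned by the $(k+1)$-subsets of the $p_i$ are exactly the coordinate hyperplanes of $\Pi$ and lie in $\Sec^k(C') \cap \Pi$, so $G|_\Pi$ is divisible by each of the $k+2$ coordinate forms, hence by their product, giving $\deg G \geq k+2$. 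Applying this with $G = F_m$ and $C' = C_p$, and letting $j \geq g-3$ be maximal with $F_m$ vanishing on $\Sec^j(C_p)$ — which is finite because $F_m$ is a proper hypersurface while the secant varieties eventually fill $\P^{3g-3}$, and for which the required independence of $j+2 \leq 2g-2$ points of $C_p$ holds throughout the relevant range — I obtain $m = \deg F_m \geq j+2 \geq g-1$. Hence $\operatorname{mult}_p F \geq g-1$ for general $p$, so $F \in |\I_C^{g-1}(g)|$, completing the argument.
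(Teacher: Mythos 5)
Your proof is correct, but it takes a genuinely different route from the one in the paper. The paper (quoting Alzati--Bolognesi) argues purely algebraically: a degree-$g$ form is polarized into a symmetric $g$-linear form on $H^0(C,K+2D)^*$, the two vanishing conditions are rewritten as identities for that multilinear form (vanishing when two arguments on $C$ coincide, versus vanishing on the diagonal at linear combinations of $g-1$ points of $C$), and the identities are matched by suitable choices of the coefficients $\lambda_i$ --- a two-line sketch whose verification is left to the reader. You instead argue geometrically: the inclusion $|\I_C^{g-1}(g)|\subseteq|\I_{\Sec^{g-2}(C)}(g)|$ by restricting to a general secant $(g-2)$-plane and running a monomial count at its $g-1$ coordinate vertices, and the reverse inclusion by passing to the tangent cone at a general $p\in C$, identifying the projection of the secant cone through $p$ with $\Sec^{g-3}$ of the projected curve $C_p\subset\P^{3g-3}$, and invoking a coordinate-hyperplane divisibility bound for forms vanishing on secant varieties. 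Your route is longer but self-contained and makes the numerology (degree $g$ matched against $(g-1)$-fold secancy) transparent, whereas the polarization proof is shorter but leaves the key computation implicit. One step of yours does need an explicit justification: the assertion that the maximal $j$ with $F_m$ vanishing on $\Sec^j(C_p)$ satisfies $j+2\le 2g-2$. This is true, but it requires the quantitative fact that $\Sec^{2g-3}(C_p)=\P^{3g-3}$ --- which follows because secant varieties of an irreducible nondegenerate curve have the expected dimension $\min(2j+1,3g-3)$ and $2(2g-3)+1\ge 3g-3$ for $g\ge 2$ --- so that the maximal $j$ is at most $2g-4$; the phrase ``eventually fill'' by itself does not guarantee that the independence hypothesis of your key lemma is still available at that maximal $j$.
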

\begin{proof}
  We reproduce here the proof for the reader's convenience.
  The elements of both linear systems can be seen as symmetric $g$-linear forms on the vector space $H^0(C, K + 2D)^*$. 
  Let $F$, $G$ be such forms. 
  Then, $F$ belongs to $|\I_C^{g-1}(g)|$ (resp. $G$ belongs to $|\I_{\Sec^{g-2} (C)}(g)|$) if and only if
\begin{align*}
  F(p_1, \ldots, p_g) &= 0 \quad  \text{ for all } p_k \in C \text{ such that } p_i = p_j \text{ for some } 1 \leq i,j \leq g \\
  G(p, \ldots, p) &= 0 \quad \text{ for any linear combination } p = \sum_{k=1}^{g-1} \lambda_i p_i \text{, where } p_i \in C .
\end{align*}
  One can show that these conditions are equivalent by exhibiting appropriate choices of $\lambda_i$. 
\end{proof}

\subsection{The exceptional fibers of $f_D$}
  Since $\dim \SU = 3g - 3$, the generic fiber of $f_D$ has dimension one. The set of stable bundles for which $\dim(f_D^{-1}(E)) > 1$ is a proper subset of $\SU$. 
  In order to study this subset, and following \cite{alzati_bolognesi}, we introduce the "Serre dual" divisor
\begin{align*}
  B := K - D
\end{align*} 
  with $\deg(B) = g-2$. As in the previous paragraphs, the isomorphism classes of extensions
\begin{align*}
  \quad  0 \to \O(-B) \to E \to \O(B) \to 0
\end{align*}
 are classified by the projective space $$\PB := \P \Ext^1 (\O(B), \O(-B)) = |K + 2B|^*.$$ We also have the rational classifying map $f_B :  \PB  \tto \SU$ defined in the same way as $f_D$.
   
\begin{proposition} \label{prop:exceptional_fibers}
  Let $E \in \SU$ be a stable bundle. Then 
\begin{align*}
  \dim(f_D^{-1}(E)) \geq 2 \qquad  \text{if and only if} \qquad   E \in \overline{f_B(\PB)}.
\end{align*}
\end{proposition}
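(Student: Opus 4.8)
My plan is to convert the fibre-dimension condition into a cohomological one and then match the resulting cohomology with the incidence relation that defines $f_B(\PB)$. First I would describe the fibre $f_D^{-1}(E)$ over a stable $E$. A point of $\PD$ mapping to $E$ is the class of an extension whose middle term is $E$, which is the same datum as a saturated sub-line-bundle $\O(-D)\subset E$; since $\det E=\O$ the quotient is automatically the locally free sheaf $\O(D)$. Such sub-bundles correspond, up to scalar, to nowhere-vanishing sections of $E\otimes\O(D)$, so $f_D^{-1}(E)$ is the open subset of $\P H^0(C,E\otimes\O(D))=\P\Hom(\O(-D),E)$ consisting of sections with no zeros. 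The sections vanishing at some point of $C$ form a subset of codimension one (vanishing at a fixed point is two conditions on a rank-two bundle, and the point varies in a curve), so this open subset is dense and $\dim f_D^{-1}(E)=h^0(C,E\otimes\O(D))-1$ whenever it is non-empty.

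Next I would compute $h^0(C,E\otimes\O(D))$. Riemann--Roch gives $\chi(E\otimes\O(D))=\deg E+2\deg D+2(1-g)=2$, while Serre duality together with the rank-two identity $E^*\cong E$ (valid because $\det E=\O$) yields $h^1(C,E\otimes\O(D))=h^0(C,E\otimes K\otimes\O(-D))=h^0(C,E\otimes\O(B))$, using $B=K-D$. Hence $h^0(C,E\otimes\O(D))=2+h^0(C,E\otimes\O(B))$, so $\dim f_D^{-1}(E)=1+h^0(C,E\otimes\O(B))$, and the condition $\dim f_D^{-1}(E)\geq 2$ is exactly $h^0(C,E\otimes\O(B))\neq 0$, i.e. the existence of a non-zero map $\O(-B)\to E$. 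This reduces the statement to identifying the locus $\{E\ \text{stable}:h^0(C,E\otimes\O(B))\neq 0\}$ with $\overline{f_B(\PB)}$ inside $\SU$.

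One inclusion is easy: if $E\in f_B(\PB)$ then $E$ is an extension of $\O(B)$ by $\O(-B)$, so the sub-bundle inclusion provides a non-zero section of $E\otimes\O(B)$; because $E\mapsto h^0(C,E\otimes\O(B))$ is upper semicontinuous on the stable locus (using a local universal family), the closed condition $h^0(C,E\otimes\O(B))\neq 0$ propagates from $f_B(\PB)$ to its closure $\overline{f_B(\PB)}$.

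For the converse, take stable $E$ with a non-zero $s:\O(-B)\to E$. If $s$ is nowhere vanishing it realises $\O(-B)$ as a saturated sub-bundle with quotient $\O(B)$, so $E$ is literally such an extension and lies in $f_B(\PB)$. The genuinely delicate case is when \emph{every} section vanishes: the saturation of $s$ is then $\O(-B+Z_0)$ for an effective divisor $Z_0$ of degree at most $g-3$ (the bound forced by stability, as $\O(-B+Z_0)$ has negative degree), and $E$ is an extension of $\O(B-Z_0)$ by $\O(-B+Z_0)$ rather than of $\O(B)$ by $\O(-B)$. The hard part will be to show that such bundles still lie in $\overline{f_B(\PB)}$, and this is where I expect the main obstacle. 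I would handle it by a degeneration argument: performing elementary (Hecke) modifications of a genuine extension along the points of $Z_0$ should produce a family $E_t\in\SU$ with $E_t\in f_B(\PB)$ for $t\neq 0$ and $E_0\cong E$, so that $E\in\overline{f_B(\PB)}$. As an alternative I would compare dimensions: $\overline{f_B(\PB)}$ is irreducible of dimension $3g-6$ (the generic fibre of $f_B$ being finite), and showing that the determinantal locus $\{h^0(C,E\otimes\O(B))\neq 0\}$ has its expected dimension $3g-6$ and lies in a single irreducible component would force the two to coincide. In either approach, controlling the bundles whose only $\O(-B)$-type subsheaf is non-saturated is the crux of the argument.
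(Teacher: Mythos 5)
Your reduction via Riemann--Roch and Serre duality to the condition $h^0(C,E\otimes\O(B))\neq 0$ is exactly the paper's argument, and your bookkeeping is the correct version of it (the paper's displayed computation contains an arithmetic slip, since $2g-2(g-1)=2$, and it conflates $h^0$ with the projective fibre dimension, whereas you correctly get $\dim f_D^{-1}(E)=h^0(C,E\otimes\O(D))-1=h^0(C,E\otimes\O(B))+1$). The step you single out as the crux --- passing from the existence of a possibly non-saturated map $\O(-B)\to E$ to membership in $\overline{f_B(\PB)}$ --- is precisely the step the paper dispatches in one unjustified sentence (``This is equivalent to $E\in\overline{f_B(\PB)}$''), so your sketched degeneration and dimension-count arguments are not a deviation from the paper but an attempt to supply the one detail it omits.
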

\begin{proof}
  Let $E$ be a stable bundle. Then, by Riemann-Roch and Serre duality theorems, the dimension of $f^{-1}_D(E)$ is given by
\begin{align*}
   h^0(C, E \otimes \O(D)) &= h^0(C, E \otimes \O(B)) + 2g - 2(g-1) 
\\ &= h^0(C, E \otimes \O(B)) + 1
\end{align*}
Thus, $\dim(f_D^{-1}(E)) > 2$ if and only if there exists a non-zero sheaf morphism $\O(-B) \to E$. This is equivalent to $E \in \overline{f_B(\PB)}$.
\end{proof}

  If $g > 2$, the divisor $|K + 2B|$ embeds $C$ as a degree $4g - 6$ curve in $\PB$ (recall that $\P \Ext^1 (\O(B),\O(-B)) = |K + 2 B|^*$). 
  Again by Theorem \ref{th:bertram}, the map $\varphi_B$ is given by the linear system $|\I_C^{g-3}(g-2)|$.
  Moreover, Pareschi and Popa \cite[Theorem~4.1]{pareschi_popa} proved that this linear system has projective dimension $\left( \sum_{i = 0}^{g-2} {\binom{g}{i}} \right) - 1$ .

  Let us denote by $\P_c$ the linear span of $\theta(\overline{f_B(\PB)})$ in $|2 \Theta|$.  
  Since the map $\theta$ is finite, $\P_c$ has projective dimension $\left[ \sum_{i = 0}^{g-2} {\binom{g}{i}} \right] - 1$, and Proposition \ref{prop:exceptional_fibers} also applies to $\p$: the fibers of $\p$ with dimension $\geq 2$ are those over $\P_c$.

{}\section{The projection in $|2 \Theta|$}
  In this Section, we describe the projection with center $\P_c$, seen as a linear subspace of $|2 \Theta|$.

  Let $\pproj$ be the linear projection in $|2 \Theta|$ with center $\P_c$. Recall that $\dim \P_c = \left[ \sum_{i = 0}^{g-2} \binom{g}{i} \right] - 1$. We can check that the supplementary linear subspaces of $\P_c$ in $|2 \Theta|$ are of projective dimension $g$. Thus, the image of $\pproj$ is a $g$-dimensional projective space.
  Let us write $$\genSU := \SU \setminus (\Kum(C) \cup \overline{\pPB})$$ where \emph{gs} stands for \emph{general stable} bundles.
  Recall that the space $H^0(C, E \otimes \O(D))$ has dimension 2 for $E \in \genSU$. Consequently, we can pick two sections $s_1$ and $s_2$ that constitute a basis for this space.

\begin{theorem} \label{th:proyeccion}
  The image of the projection $\pproj$ can be identified with the linear system $|2 D|$ on $C$, in such way that the restriction of the projection $\pproj$  to $\theta(\genSU)$ coincides with the map
\begin{align*}
  \theta(\genSU) \to |2D|  \\
  \theta(E) \mapsto \Zeroes(s_1 \wedge s_2)
\end{align*}
\end{theorem}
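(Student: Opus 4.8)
The plan is to realize the projection $\pproj$ explicitly as the map restricting $2\Theta$-sections to a suitable Abel--Jacobi copy of $C$ inside $\Pic^{g-1}(C)$, and then to identify its center with $\P_c$. Introduce the embedding $\iota : C \to \Pic^{g-1}(C)$, $p \mapsto \O(D-p)$, a translate of the Abel--Jacobi curve, with image $\iota(C)$. Restriction of sections of $\OO(2\Theta)$ gives a linear map $\rho : H^0(\Pic^{g-1}(C), \OO(2\Theta)) \to H^0(C, \iota^{*}\OO(2\Theta))$, whose associated rational map on projective spaces is a linear projection with center $\P'_c$ equal to the locus of divisors in $|2\Theta|$ containing $\iota(C)$. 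I will show $\pproj = \iota^{*}$ and that $\iota^{*}\circ\theta$ is the wedge map.

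First I would compute $\iota^{-1}(\theta(E))$ for $E \in \genSU$. Since $\wedge^{2}(E\otimes\O(D)) = \det E \otimes \O(2D) = \O(2D)$, a basis $s_1,s_2$ of the $2$-dimensional space $H^0(C,E\otimes\O(D))$ gives $s_1\wedge s_2 \in H^0(C,\O(2D))$, which is nonzero on a dense open subset of $\genSU$ (the map $\theta(E)\mapsto\Zeroes(s_1\wedge s_2)$ is a priori only rational, which suffices). The crucial point is the chain of equivalences, valid because $H^0(C,E\otimes\O(D-p))$ is the kernel of evaluation at $p$:
\begin{align*}
  p \in \iota^{-1}(\theta(E))
  &\iff h^0(C, E \otimes \O(D-p)) \neq 0 \\
  &\iff \mathrm{ev}_p \colon H^0(C, E \otimes \O(D)) \to (E \otimes \O(D)) \otimes k(p) \text{ is not injective} \\
  &\iff p \in \Zeroes(s_1 \wedge s_2).
\end{align*}
Thus $\iota^{-1}(\theta(E))$ and $\Zeroes(s_1\wedge s_2)$ have the same support; a local multiplicity check (Riemann's singularity theorem, comparing the order of vanishing of $s_1\wedge s_2$ at $p$ with $\dim\ker\mathrm{ev}_p$) promotes this to an equality of divisors. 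In particular $\iota^{*}\OO(2\Theta)\cong\O(2D)$ (this also follows directly from the standard Abel--Jacobi computation, $\iota^{*}\OO(\Theta)$ being of degree $g$ and equal to $\O(D)$ up to $2$-torsion, which squares away). Hence $\rho$ lands in $H^0(C,\O(2D))$, the target of the projection is identified with $|2D|\cong\P^{g}$, and $\iota^{*}\circ\theta$ equals $\theta(E)\mapsto\Zeroes(s_1\wedge s_2)$ on $\genSU$.

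It then remains to identify the two centers, i.e.\ to prove $\P_c=\P'_c$. The inclusion $\P_c\subseteq\P'_c$ is the clean, conceptual heart of the argument: if $E\in\overline{f_B(\PB)}$ then Proposition \ref{prop:exceptional_fibers} gives $h^0(C,E\otimes\O(D))\geq 3$, so for \emph{every} $p$ the evaluation $\mathrm{ev}_p$ from a space of dimension $\geq 3$ into the $2$-dimensional fibre has nontrivial kernel; hence $h^0(C,E\otimes\O(D-p))\neq0$ for all $p$, that is $\iota(C)\subseteq\theta(E)$ and $\theta(E)\in\P'_c$. As $\P'_c$ is linear, $\P_c=\operatorname{span}\,\theta(\overline{f_B(\PB)})\subseteq\P'_c$. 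Both centers are linear of projective dimension $2^{g}-g-2$ --- for $\P_c$ this is the given value $\big[\sum_{i=0}^{g-2}\binom{g}{i}\big]-1$, and for $\P'_c=\P(\ker\rho)$ it holds as soon as $\rho$ is surjective --- so the inclusion forces $\P_c=\P'_c$, whence $\pproj=\iota^{*}$ and the theorem follows.

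The main obstacle I anticipate is exactly this last dimension input: showing that $\iota(C)$ imposes independent conditions on $|2\Theta|$, equivalently that $\rho$ is surjective, equivalently that $\theta(E)\mapsto\Zeroes(s_1\wedge s_2)$ is dominant onto $|2D|$. I would establish it independently of the fibration statement, either by exhibiting for generic $N\in|2D|$ a stable bundle $E$ with $\Zeroes(s_1\wedge s_2)=N$ through the extension construction on $\PD$, or by a base-point-freeness / Brill--Noether argument for $|2\Theta|$ along the Abel curve $\iota(C)$. A secondary, more routine technical point is the scheme-theoretic (multiplicity) matching in the displayed equivalences, which rests on the local analysis of the theta divisor along $\iota(C)$.
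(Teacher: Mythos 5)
Your proposal follows essentially the same route as the paper's proof: the paper also realizes $\pproj$ as restriction of $|2\Theta|$ to an embedded copy $\widetilde{C}\subset\Pic^{g-1}(C)$ of the curve (there embedded by $p\mapsto\O(B+p)$, which is your $\iota$ composed with the Serre involution $L\mapsto K\otimes L^{-1}$ that fixes every $\theta(E)$), identifies the span of $\widetilde{C}$ with $|2D|^*$ and with the annihilator of $\P_c$, and obtains the wedge formula from the same evaluation-map/Riemann--Roch computation. The single ingredient you flag as outstanding --- surjectivity of $\rho$, i.e.\ that $\iota(C)$ spans a full $\P^g$ in $|2\Theta|^*$ so that $\P'_c$ has the right dimension --- is exactly the statement the paper asserts without argument and defers to \cite{alzati_bolognesi}, so your write-up is consistent with, and in places more explicit than, the paper's.
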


\begin{proof}
  This result was proved in \cite{alzati_bolognesi} for $C$ non hyperelliptic, but the proof extends harmlessly to the hyperelliptic case.
  The Picard variety $\Pic^{g-1}(C)$ contains a model $\widetilde{C}$ of $C$, made up by line bundles of type $\O(B + p)$, with $p \in C$. The span of $\widetilde{C}$ inside $|2 \Theta|^*$ corresponds to the complete linear system $|2 D|^*$. 
  Moreover, the linear span of $\widetilde{C}$ is the annihilator of $\P_c$. In particular, the projection $\pproj|_{\theta(\genSU)}$ determines a hyperplane in the annihilator of $\P_c$, which is a point in $|2 D|$. We have that this projection is given as
\begin{align*}
  \pproj|_{\theta(\genSU)} : \theta(\genSU) & \to |2D|  \\
  \theta(E) & \mapsto \Delta(E) ,
\end{align*}
where $\Delta(E)$ is the divisor defined by
\begin{align}
  \Delta (E):= \{p \in C \ | \ h^0(C,E \otimes \O(B + p)) \not = 0 \}.
\end{align}
  Equivalently, we have that $\Delta(E) = \theta(E) \cap \widetilde{C}$.
  Since $\theta(E) = \theta(i^*E)$, we directly obtain from this equation that $\Delta(E) = \Delta(i^* E)$.
  Finally, an easy Riemann-Roch argument shows that that $\Delta(E)$ is the divisor of zeroes of $s_1 \wedge s_2$.
\end{proof}

  Recall that the linear system $|K + 2D|$ embeds the curve $C$ in the projective space $\PD$. Let $N \in |2D|$ and consider the linear span $\langle N \rangle \subset \PD$. The annihilator of $\langle N \rangle$ is the vector space $H^0(C, 2D + K - N)$, which has dimension $g$. In particular, the linear span $\langle N \rangle$ has dimension $(3g - 2) - g = 2g - 2$.
  Let us write $$\PN := \langle N \rangle \subset \PD.$$ 
  We will study the classifying map $\p$ by means of the restriction maps $\p|_{\PN}$ when $N$ vary in the linear system $|2D|$.

\begin{notation}
  For simplicity, let us write $\pPN$ for the restriction map $\p|_{\PN}$.
\end{notation}
\begin{proposition} \label{prop:fibre=image}
  Let $N$ in $|2D|$ be a general divisor on $C \subset \PD$. Then, the image of
\begin{align*}
  \pPN : \PN \tto \theta(\SU)
\end{align*}
is the closure in $\theta(\SU)$ of the fiber over $N \in |2D|$ of the projection $\pproj$.
\end{proposition}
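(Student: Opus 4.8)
The plan is to reduce the statement to an equality of fibers for the composite rational map $\Psi := \pproj \circ \p : \PD \tto |2D|$. Indeed, by Theorem~\ref{th:proyeccion} the projection $\pproj$ sends $\theta(E)$ to the divisor $\Delta(E) \in |2D|$ for every $E \in \genSU$, so $\Psi$ is the map $(e) \mapsto \Delta(E_e)$ on the open set of extension classes whose associated bundle is general stable. Since $\p$ is dominant onto $\theta(\SU)$, for a general subvariety $Z$ of $|2\Theta|$ one has $\overline{\p(\p^{-1}(Z))} = \overline{Z \cap \theta(\SU)}$; applying this to $Z = \pproj^{-1}(N)$ and using $\p^{-1}(\pproj^{-1}(N)) = \Psi^{-1}(N)$, the proposition follows once I show that, for general $N$,
\[
  \Psi^{-1}(N) = \PN .
\]
So the whole problem becomes the identification of the fiber $\Psi^{-1}(N)$ with the linear span $\langle N\rangle = \PN$.

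To compute $\Delta(E_e)$ I would use the extension sequence directly. Twisting $(e)$ by $\O(D)$ gives $0 \to \O \to E_e(D) \to \O(2D) \to 0$, whose connecting homomorphism is cup product with the class $e$,
\[
  \delta_e : H^0(\O(2D)) \longrightarrow H^1(\O), \qquad t \longmapsto t \cup e .
\]
As in the proof of Theorem~\ref{th:proyeccion}, for general stable $E_e$ the space $H^0(E_e(D))$ is two--dimensional, the tautological section $s_1$ coming from $\O \hookrightarrow E_e(D)$ is nowhere vanishing, and a basis $s_1, s_2$ satisfies $\Zeroes(s_1 \wedge s_2) = \Zeroes(q(s_2))$, where $q : H^0(E_e(D)) \to H^0(\O(2D))$ is the quotient map. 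Since $\operatorname{im}(q) = \ker\delta_e$ is one--dimensional, its generator $t_e$ satisfies $\Delta(E_e) = \Zeroes(t_e)$. Finally I would rewrite this kernel via Serre duality: using $H^1(\O) \cong H^0(K)^*$ and $H^1(\O(-2D)) \cong H^0(K+2D)^*$, the identity $\langle t \cup e, s\rangle = \langle e,\, t\cdot s\rangle$ for all $s \in H^0(K)$ shows that $t \in \ker\delta_e$ if and only if $e$ annihilates the subspace $t\cdot H^0(K) \subseteq H^0(K+2D)$.

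The key computation is then purely linear on the curve. Writing $t_N \in H^0(\O(2D))$ for the section with $\Zeroes(t_N) = N$, multiplication by $t_N$ identifies $H^0(K)$ with the subspace $t_N \cdot H^0(K) = H^0(K + 2D - N) \subseteq H^0(K+2D)$ of sections vanishing along $N$; since $K + 2D - N \sim K$ this subspace is exactly the annihilator of $\langle N\rangle$. Combining with the previous paragraph gives the chain of equivalences
\[
  t_N \in \ker\delta_e \iff e \perp H^0(K+2D-N) \iff (e) \in \PN .
\]
For general $e \in \PN$ the bundle $E_e$ lies in $\genSU$, so $\ker \delta_e$ is one--dimensional and therefore equals $\mathbb{C}\,t_N$; hence $\Delta(E_e) = \Zeroes(t_N) = N$, proving $\PN \subseteq \Psi^{-1}(N)$. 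Conversely, if $\Delta(E_e) = N$ then the generator $t_e$ of $\ker\delta_e$ has divisor $N$, so it is proportional to $t_N$ and the same equivalences place $(e)$ in $\PN$; this yields the reverse inclusion and the equality $\Psi^{-1}(N) = \PN$.

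The main obstacle I anticipate is controlling genericity throughout: I must ensure that for general $N \in |2D|$ and general $(e) \in \PN$ the bundle $E_e$ actually belongs to $\genSU$ (so that Theorem~\ref{th:proyeccion} applies, the section $s_1$ is nowhere vanishing, and $\dim \ker\delta_e = 1$), and that the indeterminacy loci of $\p$, $\pproj$ and $\Psi$ do not interfere with the fiber identification. Since $\Kum(C)$ and $\overline{\pPB}$ are proper subvarieties of $\SU$, a dimension count shows that a general $(e)\in\PN$ avoids them, and the equality of the one--dimensional kernel with $\mathbb{C}\,t_N$ is precisely the transversality statement that makes the two inclusions above match; this is the point the argument must pin down carefully.
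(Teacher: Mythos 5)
Your proof is correct, and it follows the same two-inclusion skeleton as the paper's proof while replacing its key external input with a self-contained computation. The paper quotes Proposition~1.1 of Lange--Narasimhan --- $(e) \in \PN$ if and only if there is a lifting $\alpha : \O(-D) \to E_e$ with $\Zeroes(\pi_e \circ \alpha) = N$ --- and then concludes in one line via Theorem~\ref{th:proyeccion} that $\theta(E_e)$ projects to $N$; the converse inclusion is the observation that $f_D^{-1}(E) \subset \P^{2g-2}_{\Delta(E)}$ for $E \in \genSU$. You instead re-derive that equivalence directly: twisting the extension by $\O(D)$, identifying $\Delta(E_e)$ with the zero divisor of the generator of $\ker\delta_e = \operatorname{im}(q)$, and dualizing the cup product with $e$ to translate $t_N \in \ker\delta_e$ into the condition that $e$ annihilate $t_N \cdot H^0(K) = H^0(K+2D-N)$, which is precisely the annihilator of $\langle N \rangle$. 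This is exactly the content of the Lange--Narasimhan statement, so your route buys self-containedness (and makes transparent why the linear span $\langle N \rangle$ is the relevant subspace) at the cost of length. Two small points you should pin down to make the argument airtight: for the reverse inclusion you need every $E \in \genSU$ with $\Delta(E) = N$ to lie in the image of $f_D$, which is the surjectivity of $f_D$ asserted in Section~\ref{sec:classifying_maps}; and the identification of $t_N \cdot H^0(K)$ with the full annihilator of the span of the points of $N$ uses that $N$ is reduced, which is harmless for general $N$ but worth stating.
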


\begin{proof}
  Let $(e) \in \PD$ be an extension 
\begin{align*}
(e) \quad  0 \rightarrow \O(-D) \xrightarrow{i_e} E_e \xrightarrow{\pi_e} \O(D) \to 0.
\end{align*}
  By \cite[Proposition 1.1]{lange_narasimhan}, we have that $e \in \PN$ if and only if there exists a section 
$$\alpha \in H^0(C, \Hom(\O(-D),E))$$
such that $\Zeroes(\pi_e \circ \alpha) = N$. 
  This means that $\alpha$ and $i_e$ are two independent sections of $E_e \otimes \O(D)$ with $\Zeroes(\alpha \wedge i_e) = N$. Consequently, $\theta(E_e) = \pPN (e)$ is projected by $\pproj$ on $N \in |2D|$ by Theorem \ref{th:proyeccion}. Hence, the image of $\pPN$ is contained in $\overline{\pproj^{-1}(N)}$.

  Conversely, by the proof of Theorem \ref{th:proyeccion}, we have that for every bundle $E \in \genSU$, $\theta(E)$ is projected by $\pproj$ to a divisor $\Delta(E) \in |2D|$.
  The argument used above implies that the fiber $\p^{-1}(\theta(E)) = f_D^{-1}(E)$ of such bundle is contained in $\P^{2g - 2}_{\Delta(E)}$. Consequently, the fiber of a general divisor $N \in |2D|$ by $\pproj$ is contained in the image of $\pPN$.
\end{proof}
  
{}\section{The restriction map} \label{sec:restriction_map}
  In this section, we describe the restriction map $\pPN$  for a generic $N \in |2D|$. Let us define the following secant varieties in $\PD$:
\begin{align*}
  \Sec^N &:= \Sec^{g-2}(C) \cap \PN \\
  \Sec^n(N) &:= \bigcup_{\substack{M \subset N \\ \#M = n + 1}} \operatorname{span} \{M \}
\end{align*}
  Note that, since the points of $N$ are already in $\PN$, we have $\Sec^{g-2}(N) \subset \Sec^N$. 
\begin{lemma} \label{lem:base_locus}
  The restriction map $\pPN$ is given by the linear system $|\I_{\Sec^N}(g)|$. 
\end{lemma}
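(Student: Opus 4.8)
The plan is to transport the global description of $\p$ provided by Theorem~\ref{th:bertram} together with Proposition~\ref{prop:linear_systems_coincide} onto the linear subspace $\PN = \langle N \rangle$, and then to check that the restriction loses nothing. Write $W := \Sec^{g-2}(C)$, so that $\p$ is defined by the complete linear system $|\I_W(g)|$ of degree-$g$ forms on $\PD$ vanishing on $W$. By construction, $\pPN = \p|_{\PN}$ is the map attached to the image of the restriction homomorphism
\begin{align*}
  \rho : H^0(\PD, \I_W(g)) \longrightarrow H^0(\PN, \OO_{\PN}(g)).
\end{align*}
The easy half is the inclusion $\operatorname{im}(\rho) \subseteq H^0(\PN, \I_{\Sec^N}(g))$: any form vanishing on $W$ restricts to a form on $\PN$ vanishing on $W \cap \PN = \Sec^N$. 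Since two nested linear subsystems of $|\OO_{\PN}(g)|$ induce the same rational map only when they coincide, the entire content of the lemma is that $\rho$ is \emph{surjective} onto $H^0(\PN, \I_{\Sec^N}(g))$.

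To attack surjectivity I would tensor the structure sequence of the linear subspace,
\begin{align*}
  0 \to \I_{\PN} \to \OO_{\PD} \to \OO_{\PN} \to 0,
\end{align*}
with $\I_W(g)$ and pass to the long exact sequence in cohomology. Granting that the restricted sheaf $\OO_{\PN} \otimes \I_W$ has image exactly $\I_{\Sec^N}$ in $\OO_{\PN}$ (the sheaf-theoretic form of $\Sec^N = W \cap \PN$, which is also what governs the $\operatorname{Tor}$-term appearing on the left after tensoring), the cokernel of $\rho$ is bounded by $H^1\!\left(\PD,\, \I_W \otimes \I_{\PN}(g)\right)$, and the problem reduces to the vanishing of this first cohomology group.

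The step I expect to be the main obstacle is precisely this vanishing, for a concrete geometric reason: the plane $\PN$ is very far from a generic $(2g-2)$-plane, since it contains the $2g$ points $p_1,\dots,p_{2g}$ of $N \subset C$ and hence all of the $(g-2)$-secant planes they span. Thus $\Sec^N \supseteq \Sec^{g-2}(N)$ already has dimension $g-2$, strictly larger than the expected dimension of $W \cap \PN$, so the intersection is not transverse and no Bertini-type genericity is available; the $\operatorname{Tor}$-terms above need not vanish and must be controlled by hand. To circumvent this I would try to bound the relevant cohomology through the Castelnuovo--Mumford regularity of $W = \Sec^{g-2}(C)$ (forcing $W$ to be cut out in degree $\leq g$ and killing $H^1(\PD, \I_W(g-1))$, from which the vanishing of $H^1(\PD, \I_W\otimes\I_{\PN}(g))$ can be propagated along the Koszul resolution of $\I_{\PN}$), while using genericity of $N$ in $|2D|$ only to pin down the excess locus $\Sec^N$. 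As a cross-check and possible alternative route, one can match dimensions: by Proposition~\ref{prop:fibre=image} the image of $\pPN$ is the fibre $\overline{\pproj^{-1}(N)}$, so $\dim \operatorname{im}(\rho)$ equals one plus the dimension of the linear span of that fibre in $|2\Theta|$, a quantity accessible through Bertram's identification $H^0(\PD, \I_C^{g-1}(g)) \cong H^0(\SU, \L)$; comparing it with $h^0(\PN, \I_{\Sec^N}(g))$ would upgrade the inclusion above to the desired equality.
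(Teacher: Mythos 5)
Your reading of the lemma is more demanding than the paper's own proof, and this is where the comparison matters. The paper disposes of the statement in one line: by Theorem \ref{th:bertram} and Proposition \ref{prop:linear_systems_coincide} the map $\p$ is given by the complete system $|\I_{\Sec^{g-2}(C)}(g)|$ on $\PD$, and restricting these forms to $\PN$ yields degree-$g$ forms vanishing on $\Sec^{g-2}(C) \cap \PN = \Sec^N$. That is exactly your ``easy half'', the inclusion $\operatorname{im}(\rho) \subseteq H^0(\PN, \I_{\Sec^N}(g))$; the paper treats this as the entire content of the lemma and says nothing about surjectivity of $\rho$. You are right that, taken literally --- and the paper does later rely on the completeness, speaking of ``the complete linear system $|\I_{\Sec^N}(g)|$ defining $\pPN$'' in Sections \ref{sec:factor_g3} and \ref{sec:furtherbaselocus} --- the statement asserts more, namely that the restricted system is complete. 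Isolating that as the real issue is a genuine observation that the paper's one-line proof does not address.

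As a proof, however, your proposal has a gap precisely at that point: the surjectivity of $\rho$ is reduced to the vanishing of $H^1\bigl(\PD, \I_{\Sec^{g-2}(C)} \otimes \I_{\PN}(g)\bigr)$, and this vanishing is never established. The route you sketch --- Castelnuovo--Mumford regularity of $\Sec^{g-2}(C)$ propagated along the Koszul resolution of $\I_{\PN}$ --- is itself a substantial unproved input (regularity bounds for higher secant varieties of curves are delicate and not available off the shelf in this generality), and you also leave conditional the identification of the scheme-theoretic restriction $\I_{\Sec^{g-2}(C)} \cdot \OO_{\PN}$ with $\I_{\Sec^N}$, which is exactly where the non-transversality you yourself point out (the excess component $\Sec^{g-2}(N)$) could cause trouble. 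The alternative dimension count via Proposition \ref{prop:fibre=image} is likewise only indicated, not carried out. In summary: your easy half reproduces the whole of the paper's argument; your hard half correctly identifies what the paper leaves implicit, but does not prove it.
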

\begin{proof}
  This is a direct consequence of Theorem \ref{th:bertram} and Proposition \ref{prop:linear_systems_coincide}.
\end{proof} 

\subsection{The map $h_N$} \label{sec:h_N}
  Let $h_N$ be the rational map defined by the complete linear system $|\I_{\Sec^{g-2}(N)}(g)|$. 
  In \cite{bolognesi_BLMS}, the author describes this map and relates it to the GIT and Mumford-Knudsen compactifications of the moduli space $\M_{0,2g}$.
  In the next paragraphs we briefly recall some of these results.

  Let $Z$ be a rational normal curve in $\PN$ passing through the points of $N$. This curve uniquely determines a configuration of $2g$ points in $\P^1$. 
  
  The following Theorem is a translation of results from \cite[Theorem 4.3 and Proposition 4.5]{bolognesi_BLMS} to our situation.

\begin{theorem} \label{th:RNC}
  The image of $h_N$ is canonically isomorphic to the GIT moduli space $\MGIT$ of ordered configurations of $2g$ points in $\P^1$. The map $h_N$ contracts every rational normal curve $Z$ passing through the $2g$ points $N$ to a point $z$ in $\MGIT$. This point represents an ordered configuration of the $2g$ points $N$ on the rational curve $Z$.
\end{theorem}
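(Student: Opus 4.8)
The plan is to treat this statement as the finite-point avatar of Bertram's and Bolognesi's rational-normal-curve description, so that all the geometry is governed by the linear system $|\I_{\Sec^{g-2}(N)}(g)|$ identified in Lemma \ref{lem:base_locus}. First I would record the finite-point analogue of Proposition \ref{prop:linear_systems_coincide}: a degree $g$ form on $\PN$ vanishes on $\Sec^{g-2}(N)$ if and only if it vanishes to order at least $g-1$ at each of the $2g$ points of $N$. The argument is identical to that of Proposition \ref{prop:linear_systems_coincide}, reading the forms as symmetric $g$-linear forms on $H^0(C,K+2D)^*$ and using that any diagonal evaluation with a repeated marked point is produced by a suitable linear combination of $g-1$ of the marked points. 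Before using this I would check that, for generic $N$, the $2g$ points are in uniform position in $\PN\cong\P^{2g-2}$ (any $2g-1$ of them linearly independent), which follows since $C$ is embedded by the very ample system $|K+2D|$ and a generic member of $|2D|$ cuts out points in general position.

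The geometric heart is the contraction, and here the key step is a degree count on a rational normal curve. Let $Z\subset\PN$ be a rational normal curve of degree $2g-2$ through the $2g$ points of $N$, and let $F\in|\I_{\Sec^{g-2}(N)}(g)|$ be a form not containing $Z$. The restriction $F|_Z$ is a form of degree $g(2g-2)=2g(g-1)$ on $Z\cong\P^1$, while by the previous step it vanishes to order at least $g-1$ at each of the $2g$ marked points, hence with total vanishing order at least $2g(g-1)$. Since this equals the degree, $F|_Z$ vanishes to order exactly $g-1$ at each marked point and nowhere else; therefore $F|_Z$ is, up to a scalar depending on $F$, the fixed section $\prod_{i=1}^{2g}\ell_i^{\,g-1}$, where $\ell_i$ vanishes at $p_i\in Z$. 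Consequently all forms of the system restrict proportionally to $Z$, so $h_N$ is constant along $Z$ and contracts it to a single point, whose value records exactly the position of $p_1,\dots,p_{2g}$ on $Z\cong\P^1$, which is the ordered configuration asserted in the statement.

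To identify the image with $\MGIT$ I would organise these curves into a family. A rational normal curve in $\P^{2g-2}$ is determined by $2g+1$ general points, so through the $2g$ points of $N$ together with one further general point $x\in\PN$ there passes a unique curve $Z_x$; this exhibits $h_N$ as the rational map $x\mapsto Z_x\mapsto[\,p_1,\dots,p_{2g}\text{ on }Z_x\,]$, whose fibres are precisely the rational normal curves through $N$ (one-dimensional), and whose image has dimension $(2g-2)-1=2g-3=\dim\MGIT$. The family of such curves is itself $(2g-3)$-dimensional and maps to $\M_{0,2g}$ by sending a curve to the class of its marked configuration; this is the realisation of $\M_{0,2g}^{\mathrm{GIT}}$ by rational normal curves through general points. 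At this point I would invoke \cite[Theorem 4.3 and Proposition 4.5]{bolognesi_BLMS}, whose hypotheses are met by the uniform position established above, to upgrade the set-theoretic configuration assignment to a canonical isomorphism between the image of $h_N$ and the GIT quotient $\MGIT$, matching the complete linear system $|\I_{\Sec^{g-2}(N)}(g)|$ with the sections of the democratic $\mathrm{PGL}(2)$-linearisation.

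I expect the main obstacle to be exactly this last identification: showing that the image is canonically the GIT quotient, and not merely $\M_{0,2g}$ on a dense open set. This forces one to match our linear system with the ring of $\mathrm{PGL}(2)$-invariants for the symmetric polarisation and to control the behaviour over the strictly semistable boundary, where several marked points collide on $Z$; this is precisely where the detailed invariant-theoretic analysis of \cite{bolognesi_BLMS} (in the spirit of Kapranov's Chow-quotient model) is indispensable. A secondary technical point is verifying that $\Sec^{g-2}(N)$ is genuinely the base locus of the restricted system for generic $N$, i.e.\ the finite-point counterpart of Proposition \ref{prop:linear_systems_coincide}.
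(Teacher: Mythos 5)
The paper offers no proof of this theorem at all: it is stated as ``a translation of results from \cite[Theorem 4.3 and Proposition 4.5]{bolognesi_BLMS} to our situation,'' so the entire content --- the identification of the image with $\MGIT$ \emph{and} the contraction of rational normal curves --- is imported from that reference. Your proposal ultimately does the same for the only genuinely hard part (matching the linear system with the $\mathrm{PGL}(2)$-invariants and controlling the semistable boundary, which you correctly flag as the main obstacle and delegate to \cite{bolognesi_BLMS}), but in addition you supply a self-contained and correct verification of the contraction statement: the degree count $g\cdot\deg Z = g(2g-2) = 2g(g-1) = \sum_i (g-1)$ forces every member of the system to restrict to the fixed section $\prod_i \ell_i^{g-1}$ on $Z$, so the system has constant ratio along $Z$. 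This is a worthwhile addition, and your dimension counts (the family of rational normal curves through $N$ is $(2g-3)$-dimensional, matching $\dim\MGIT$) are right. The one step you should not wave at is the claim that a degree-$g$ form on $\PN$ vanishing on $\Sec^{g-2}(N)$ automatically vanishes to order $\geq g-1$ at each point of $N$: the paper only records the easy converse direction (via Lemma \ref{lemma:vanishing_forms}, multiplicity $g-1$ at the points implies vanishing on the secant planes), and for forms intrinsic to $\PN$ --- as opposed to restrictions of members of $|\I_C^{g-1}(g)|$ from $\PD$, for which the multiplicity is inherited --- the direction you need requires an argument on the tangent cone at each $p_i$ using the general position of the remaining $2g-1$ directions. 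This is true and standard for general points, but it is not literally Proposition \ref{prop:linear_systems_coincide}, so it deserves its own short induction rather than the phrase ``the argument is identical.''
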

  Since the map $h_N$ is defined by a linear subsystem of $|\I_{\Sec^N}(g)|$, we have that $\pPN$ factors through $h_N$:
\begin{equation} \label{diag:factorizacionporhn}
  \begin{tikzcd}
    \PN \arrow[r,dashed,"h_N"] \arrow[rd,swap,dashed,"\pPN"] & \MGIT  \arrow[d,dashed] \\
    &                                           {|\I_{\Sec^N}(g)|} 
  \end{tikzcd}
\end{equation}

  Let us now study the complete base locus $\Sec^N$ of the restriction map $\pPN$.
  By definition, the points in $\Sec^N$ are given by the intersections $\langle L_{g-1} \rangle \cap \PN $, where $L_{g-1}$ is an effective divisor of degree $g-1$ and $\langle L_{g-1} \rangle$ is its linear span in $\PD$.
  If $L_{g-1}$ is contained in $N$, it is clear that $\langle L_{g-1} \rangle \subset \Sec^{g-2}(N) \subset \PN$.

\begin{lemma} \label{lemma:nonempty_intersection}
  Let $L_{g-1}$ be an effective divisor on $C$ of degree $g - 1$, not contained in $N$. Then, 
\begin{align*}
  \langle L_{g-1} \rangle \cap \PN \not = \phi \qquad  \text{if and only if} \qquad \dim|L_{g-1}| \geq 1.
\end{align*}
  Moreover, if the intersection is non-empty, we have that $$\dim (\langle L_{g-1} \rangle \cap \PN) = \dim |L_{g-1}| - 1.$$
\end{lemma}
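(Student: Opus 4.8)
The plan is to reduce the statement to a Riemann--Roch computation inside $\PD = |K+2D|^*$. Recall that $C$ is embedded by the complete very ample system $|K+2D|$, so for any effective divisor $D'$ on $C$ the linear span $\langle D'\rangle \subset \PD$ has annihilator equal to the space of sections of $K+2D$ vanishing along $D'$; consequently
\[
  \dim\langle D'\rangle = h^0(K+2D) - h^0(K+2D-D') - 1 .
\]
Here $\deg(K+2D)=4g-2$ and $K+2D$ is nonspecial, so $h^0(K+2D)=3g-1$. Applying this with $D'=L_{g-1}$, the divisor $K+2D-L_{g-1}$ has degree $3g-1>2g-2$, hence is nonspecial with $h^0=2g$, giving $\dim\langle L_{g-1}\rangle = g-2$. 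As already recalled, $\dim\PN = 2g-2$, its annihilator being $H^0(K+2D-N)\cong H^0(K)$ since $N\sim 2D$.

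Next I would evaluate the intersection through the Grassmann dimension formula
\[
  \dim(\langle L_{g-1}\rangle\cap\PN) = \dim\langle L_{g-1}\rangle + \dim\PN - \dim\langle L_{g-1},N\rangle ,
\]
with the convention that the empty set has dimension $-1$. The join $\langle L_{g-1},N\rangle$ is the span of the two divisors together, whose annihilator consists of the sections of $K+2D$ vanishing on both $L_{g-1}$ and $N$. When $L_{g-1}$ has support disjoint from $N$ --- the generic situation the lemma addresses --- this annihilator equals $H^0(K+2D-L_{g-1}-N)\cong H^0(K-L_{g-1})$, again using $N\sim 2D$. Hence $\dim\langle L_{g-1},N\rangle = (3g-1)-h^0(K-L_{g-1})-1$, and substituting yields
\[
  \dim(\langle L_{g-1}\rangle\cap\PN) = h^0(K-L_{g-1})-2 .
\]

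To conclude I would rewrite $h^0(K-L_{g-1})$ in terms of $|L_{g-1}|$. Since $\deg L_{g-1}=g-1$, Riemann--Roch and Serre duality give $h^0(L_{g-1})-h^0(K-L_{g-1}) = \deg L_{g-1}+1-g = 0$, so $h^0(K-L_{g-1}) = h^0(L_{g-1}) = \dim|L_{g-1}|+1$. Therefore
\[
  \dim(\langle L_{g-1}\rangle\cap\PN) = \dim|L_{g-1}|-1 ,
\]
which is nonnegative precisely when $\dim|L_{g-1}|\geq 1$. This establishes the equivalence and the dimension count simultaneously.

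I expect the delicate step to be the identification of the annihilator of the join, namely $H^0(K+2D-L_{g-1}-N)\cong H^0(K-L_{g-1})$. This is transparent only when $L_{g-1}$ and $N$ share no points: if they meet, one must instead subtract the divisor whose coefficient at each point is the larger of those of $L_{g-1}$ and $N$, and moreover every common point of $L_{g-1}$ and $N$ automatically lies in $\langle L_{g-1}\rangle\cap\PN$, which would make the intersection nonempty independently of $\dim|L_{g-1}|$. Thus the clean equivalence genuinely requires reading ``not contained in $N$'' as ``meeting $N$ in no point,'' the relevant generic regime; I would also invoke explicitly the annihilator description of $\langle D'\rangle$ for possibly non-reduced $D'$, which is standard for a complete very ample system but deserves to be stated.
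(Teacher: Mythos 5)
Your proof is correct and follows essentially the same route as the paper's: both compute the spans $\langle L_{g-1}\rangle$, $\PN$ and the join $\langle L_{g-1}, N\rangle$ via their annihilators inside $H^0(C, K+2D)$ and conclude with Riemann--Roch and Serre duality, your Grassmann-formula identity merely packaging the paper's two-step argument (non-emptiness criterion, then the refined dimension count) into a single computation. Your closing caveat about points common to $L_{g-1}$ and $N$ corresponds to the paper's final remark on counting such points only once, and is in fact spelled out more carefully than in the original.
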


\begin{proof}
  First, let us suppose that $L_{g-1}$ and $N$ have no points in common. 
  The vector space $V := H^0(C, 2D + K - L_{g-1})$ is the annihilator of the span $\langle L_{g-1} \rangle$ in $\PD$. By the Riemann-Roch theorem, we see that $V$ has dimension $2g$, hence $$\dim \langle L_{g-1} \rangle = (3g - 2) - 2g = g - 2.$$
  Let $d$ be the dimension of the span $\langle L_{g-1}, N \rangle$ of the points of $L_{g-1}$ and $N$. Since the dimension of $\PN = \langle N \rangle$ is $2g-2$, we have that $d \leq (g-2) + (2g - 2) + 1 = 3g-3$, where the equality holds iff $\langle L_{g-1} \rangle \cap \PN$ is empty.

  In particular, this intersection is non-empty iff $d \leq 3g -4$. Since $\dim |K + 2D|^* = \dim \PD = 3g - 2$, this is equivalent to the annihilator space $$W :=  H^0(C, 2D + K - L_{g-1} - N) = H^0(C, K - L_{g-1})$$ being of dimension $\geq 2$. By Riemann-Roch and Serre duality, we obtain that this condition is equivalent to $\dim |L_{g-1}| \geq 1$. 

  More precisely, let us suppose that $\langle L_{g-1} \rangle \cap \PN$ is non-empty and let $e := \dim (\langle L_{g-1} \rangle \cap \PN)$. Then, we have that $$d = 3g - 3 - (e + 1),$$ and the annihilator space $W$ is of dimension $2 + e$. Again by a Riemann-Roch computation, we conclude that $e = \dim |L_{g-1}| - 1$.

  Finally, if $L_{g-1}$ and $N$ have some points in common, we have to count them only once in the vector space $W$ to avoid requiring higher vanishing multiplicity to the sections.
 
\end{proof}

  From this Lemma, we conclude that $\Sec^{g-2}(N)$ is a proper subset of $\Sec^N$ if and only if there exists a divisor $L_{g-1}$ not contained in $N$ with $\dim|L_{g-1}| \geq 1$. By the Existence Theorem of Brill-Noether theory (see \cite[Theorem 1.1, page 206]{arbarello_cornalba}) this is equivalent to $g \geq 4$ in the non-hyperelliptic case. We will discuss the first low genera cases in Section \ref{sec:lowgenera}.

\subsection{A rational normal curve} \label{sec:gamma}

  We have seen that the secant variety $\Sec^{g-2}(N)$ is part of the base locus $\Sec^N$ of the map $\pPN$, and that this inclusion is strict for $g \geq 4$ in the non-hyperelliptic case.

  In the hyperelliptic case, we have an additional base locus for every genus, which appears due to the hyperelliptic nature of the curve. This locus arises as follows: for each pair $P = \{ p, i(p) \}$ of involution-conjugate points in $C$, consider the hyperelliptic secant line $l$ in $\PD$ passing through the points $p$ and $i(p)$. Let $Q_P$ be the intersection of the line $l$ with $\PN$.
  Let us define $\Gamma \subset \PN$ as the locus of intersection points $Q_P$ when we vary the pair $P$.

\begin{figure}
      \centering
          \def\svgwidth{0.7\columnwidth}
          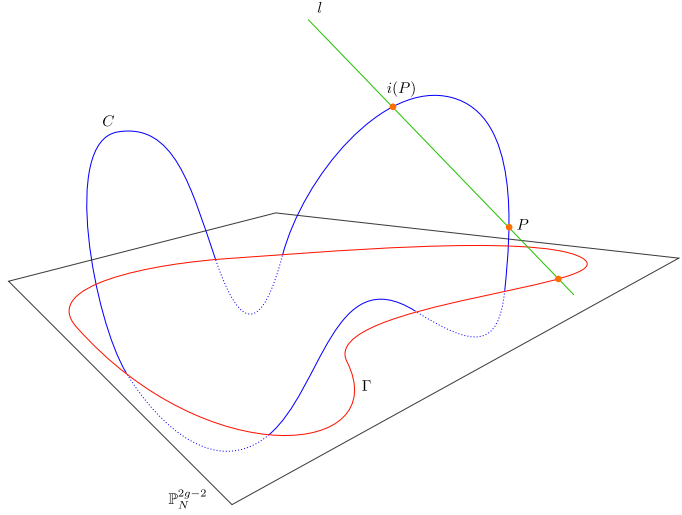
	\caption{The situation in genus $3$. The curves $\Gamma$ and $C$ intersect along the divisor $D$, of degree 6. The secant lines $l$ cutting out the hyperelliptic pencil define the curve $\Gamma$.}\label{fig:gamma}
\end{figure}

\begin{lemma} \label{lem:gamma}
  The locus $\Gamma \subset \PN$ is a rational normal curve in $\PN$. Moreover, $\Gamma$ passes through the $2g$ points $N \subset C$. 
\end{lemma}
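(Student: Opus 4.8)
The plan is to realise $\Gamma$ as the image of an explicit morphism $\gamma\colon\P^1\to\PN$ and to show that $\gamma$ is the complete Veronese embedding of degree $2g-2$. Write $\pi\colon C\to\P^1$ for the hyperelliptic double cover, so that the involution-conjugate pairs $P=\{p,i(p)\}$ are exactly the fibres $\pi^{-1}(t)$. Throughout I would use the hyperelliptic relations $p+i(p)\sim g^1_2$, $K\sim(g-1)g^1_2$, and the resulting structural fact $H^0(C,K)=\pi^*H^0(\P^1,\OO_{\P^1}(g-1))$ (every holomorphic differential is anti-invariant); this is the key input that makes the argument work.

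First I would set up the parametrisation. Since $N\in|2D|$ we have $K+2D-N\sim K$, so the annihilator of $\PN=\langle N\rangle$ inside $H^0(C,K+2D)$ is $s_N\cdot H^0(C,K)$, where $s_N$ cuts out $N$. A point $\lambda\,\mathrm{ev}_{p_t}+\mu\,\mathrm{ev}_{i(p_t)}$ of the secant line $l_t$ lies in $\PN$ precisely when $\lambda\,s_N(p_t)\,\omega(p_t)+\mu\,s_N(i(p_t))\,\omega(i(p_t))=0$ for all $\omega\in H^0(C,K)$. Writing $\omega=\pi^*\rho$ and using $\omega(p_t)=\rho(t)=\omega(i(p_t))$, all $g$ of these conditions collapse to the single equation $\lambda\,s_N(p_t)+\mu\,s_N(i(p_t))=0$; this reproves that $l_t\cap\PN$ is one point (the Riemann--Roch count $\dim\langle l_t,\PN\rangle=2g-1$ gives the same), and yields
\[
  Q_t=\bigl[\,s_N(i(p_t))\,\mathrm{ev}_{p_t}-s_N(p_t)\,\mathrm{ev}_{i(p_t)}\,\bigr].
\]
As this only changes sign under $p_t\leftrightarrow i(p_t)$, it descends to a morphism $\gamma\colon\P^1\to\PN$, $t\mapsto Q_t$, with image $\Gamma$. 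The incidence with $N$ is then immediate and gives nondegeneracy for free: if $p_k\in N$ and $t_k=\pi(p_k)$ then $s_N(p_k)=0$, so $Q_{t_k}=[\mathrm{ev}_{p_k}]=p_k$ (for general $N$, $i(p_k)\notin N$, so $s_N(i(p_k))\neq0$). Hence $\Gamma$ contains the $2g$ points of $N$, and since $\PN$ is by definition their span, $\langle\Gamma\rangle=\PN$.

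The step I expect to be the main obstacle is the degree computation. A hyperplane $H_s$, $s\in H^0(C,K+2D)$, pulls back along $\Phi:=\gamma\circ\pi\colon C\to\PN$ to the zeros of
\[
  G_s(q)=s_N(i(q))\,s(q)-s_N(q)\,s(i(q)),
\]
an anti-invariant section of the $i$-invariant line bundle $L\sim K+2D+2i^*D$ of degree $6g-2$. One checks directly that $G_s$ vanishes at each of the $2g+2$ Weierstrass points (where $i(q)=q$), and that for general $N$ these exhaust the base locus of $\{G_s\}$ (a further base point $q$ would force $q,i(q)\in N$, impossible when $N$ carries no conjugate pair). Thus $\Phi^*\OO_{\PN}(1)\sim L-W$ with $W$ the Weierstrass divisor, and using $D+i^*D\sim g\,g^1_2$, $K\sim(g-1)g^1_2$, $W\sim(g+1)g^1_2$ one finds
\[
  L-W\sim\bigl((g-1)-(g+1)\bigr)g^1_2+2(D+i^*D)\sim(2g-2)\,g^1_2=\pi^*\OO_{\P^1}(2g-2),
\]
so that $\gamma^*\OO_{\PN}(1)=\OO_{\P^1}(2g-2)$. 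The care needed here is exactly in pinning down the base locus as the reduced Weierstrass divisor and in keeping the local trivialisations in the evaluation formulas consistent.

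Finally I would conclude formally: a nondegenerate morphism $\P^1\to\PN\cong\P^{2g-2}$ with $\gamma^*\OO_{\PN}(1)=\OO_{\P^1}(2g-2)$ is necessarily given by the complete linear system $|\OO_{\P^1}(2g-2)|$ (its $2g-1$ coordinate forms, being independent, form a basis of $H^0(\OO_{\P^1}(2g-2))$), hence is the degree-$(2g-2)$ Veronese embedding. Therefore $\Gamma$ is a rational normal curve in $\PN$, and it passes through the $2g$ points of $N$ by the previous paragraph.
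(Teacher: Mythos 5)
Your proposal is correct, but it reaches the conclusion by a genuinely different route than the paper. The paper never writes down a parametrization of $\Gamma$: it proves that each hyperelliptic secant line meets $\PN$ in a single point by a Riemann--Roch count on the annihilator of $\langle l, \PN\rangle$, and it gets the degree by intersecting $\Gamma$ with the special hyperplane $H=\langle q_1,\dots,q_{2g-2}\rangle$ and showing, again by Riemann--Roch, that set-theoretically $\Gamma\cap H=\{q_1,\dots,q_{2g-2}\}$ (the possible extra intersection being excluded because it would force $q_{2g-1}+q_{2g}\sim h$). Your argument instead exhibits $\gamma\colon\P^1\to\PN$ explicitly, using that $H^0(C,K)$ consists of pullbacks from $\P^1$ to collapse the $g$ linear conditions cutting out $\PN$ on the secant line to the single equation $\lambda s_N(p_t)+\mu s_N(i(p_t))=0$, and then computes $\gamma^*\OO_{\PN}(1)$ as a line bundle degree. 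I checked the delicate step: the relevant sections $G_s$ do live in the degree-$(6g-2)$ bundle $K+2D+2i^*D$ (not in $(K+2D)\otimes i^*(K+2D)$ of degree $8g-4$ --- the drop by $K$ is exactly where the pullback structure of $H^0(C,K)$ enters), their fixed divisor is the reduced Weierstrass divisor $W$ for general $N$, and $L-W\sim\pi^*\OO_{\P^1}(2g-2)$, so $\deg\gamma^*\OO_{\PN}(1)=2g-2$ as claimed. Your approach buys two things the paper leaves implicit: the degree is computed as an honest intersection number (the paper's count of $\Gamma\cap H$ is only set-theoretic, so it tacitly assumes multiplicity one at each $q_k$), and the final step shows $\gamma$ is the complete Veronese embedding, hence that $\Gamma$ is smooth and $\gamma$ is injective, rather than deducing these a posteriori from the classification of minimal-degree curves. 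The price is the bookkeeping of linearizations: whether $G_s$ is ``invariant'' or ``anti-invariant'' depends on the choice of isomorphism $i^*K\cong K$ (with the canonical one, differentials are anti-invariant and your $G_s$ acquires a plus sign and vanishes on $W$ because $i$ acts by $-1$ on $K_w$), so a written-up version must fix one convention and verify that the vanishing on $W$ and the order-one claim survive it; you flag this, and the computation is sound either way. A small shortcut you could add: since $\Gamma\supset N$ spans $\PN$, any nondegenerate image curve has degree at least $2g-2$, so the fixed divisor cannot exceed $W$ once it is known to contain $W$, which disposes of the base-locus verification without the case analysis.
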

\begin{proof}
  Let us start by showing that the intersection is non-empty for every pair $\{p, i(p) \}$. Since $\dim |p + i(p)| = \dim |h| = 1$, the intersection $l \cap \PN$ is non-empty by Lemma \ref{lemma:nonempty_intersection}. 

  Let us show that this intersection is a point, i.e. that the line $l$ is not contained in $\PN$.
  Recall that $\PD = |2D + K|^*$.
  If the points $p$ and $i(p)$ are both not contained in the divisor $N$, the vector space $$ V := H^0(C,2D + K - N - (p + i(p)) = H^0(C,2D + K - N - h)$$ is exactly the annihilator of the span $\langle l, \PN \rangle$ in $\PD$. In particular, the codimension of $\langle l, \PN \rangle$ in $\PD$ is the dimension of $V$. By Riemann-Roch and Serre duality, we get that $\dim V = g - 2$, thus $\dim \langle l, \PN \rangle = 3g - 2 - (g - 2) = 2g$. This means that the intersection  $l \cap \PN$ is a point. 

  For the case $p \in N$ and $i(p) \not \in N$, let us remark that the the annihilator of the span $\langle l, \PN \rangle$ is now the vector space $H^0(C,2D + K - N - i(p))$. Since $$h^0(C,2D + K - N - i(p)) < h^0(C,2D + K - N),$$ we conclude that the line $l$ is not contained in $\PN$.
  The case $\{p, i(p) \} \subset N$ is excluded by our genericity hypotheses on $N$.  
  To summarize, we deduce that the locus $\Gamma$ is a curve in $\PN$.

  Let $q$ be a point of $N$. Then, $q$ is a point of the plane $\PN$. Consequently, the line passing through $q$ and $i(q)$ intersects the plane $\PN$ at $q$. Thus, we have that $\Gamma$ passes through the points of $N$.
  Moreover, it is clear that $N$ is the only intersection of $\Gamma$ and $C$, i.e. $\Gamma \cap C = N$.

  Let us prove now that $\Gamma$ is a rational normal curve. Since $\Gamma$ is defined by the hyperelliptic pencil, it is clear that $\Gamma$ is rational. Moreover, since the divisor $D$ is generic, the span of any subset of $2g - 1$ points of $D$ is $\PN$. Thus, it suffices to show that the degree of $\Gamma \subset \PN$ is precisely $2g - 2$.
  Let us write 
$$N = q_1 + \cdots + q_{2g}$$
 with $q_1, \ldots, q_{2g} \in C$.
  By the previous paragraph, $\Gamma$ passes by these $2g$ points. Let us consider a hyperplane $H$ of $\PN$ spanned by $2g - 2$ points of $N$. Without loss of generality, we can suppose that these points are the first $2g - 2$ points $q_1, \ldots, q_{2g - 2}$. To show that the degree of $\Gamma$ is $2g - 2$, we have to show that the intersection of $\Gamma$ with $H$ consists exactly of the points $q_1, \ldots, q_{2g - 2}$.

  The intersection $l \cap H$ is empty if and only if the linear span $\langle l, H \rangle$ of $l$ and $H$ in $\PD$ is of maximal dimension $2g - 1$, i.e. of codimension $g - 1$ in $\PD$. 
  Consider the divisors 
\begin{align*}
  D_H= q_1 + \cdots + q_{2g - 2} \qquad \mbox{and} \qquad D_l = q + i(q) \ .
\end{align*}
  As before, if $\{p, i(p)\} \cap \{q_1, \ldots, q_{2g - 2} \}$ is empty, the vector space $W = H^0(C,2D + K - D_H - D_l)$ is the annihilator of the span $\langle l, H \rangle$ in $\PD$.
  In particular, the codimension of $\langle l, H \rangle$ in $\PD$ is given by the dimension of $W$. Again by Riemann-Roch and Serre duality theorems, we can check that  
  $$\dim W = h^0(C,-2D + D_H + D_l) + g - 1.$$
  Thus, the codimension of $\langle l, H \rangle$ in $\PD$ is greater than $g - 1$ if and only if $h^0(C,-2D + D_H + D_l) > 0$. Since $\deg (-2D + D_H + D_l) = 0$, this is equivalent to $-2D + D_H + D_l \sim 0$. Since $N = q_1 + \cdots + q_{2g} \sim 2D$, we have that 
\begin{align*}
  -2D + D_H + D_l \sim 0 &\iff p + i(p) \sim q_{2g - 1} + q_{2g} \\ &\iff h \sim q_{2g - 1} + q_{2g} \\ &\iff i(q_{2g - 1}) = q_{2g}.
\end{align*}
  By our genericity hypothesis on $N$, the last condition is not satisfied.
  Consequently, we conclude that the line $l$ intersects the hyperplane $H$ iff $\{p, i(p)\} \cap \{q_1, \ldots, q_{2g - 2} \}$ is non-empty, i.e. iff $p$ or $i(p)$ is one of the $q_k$ for $k = 1, \ldots, 2g - 2$. In particular, $$\Gamma \cap H = \{q_1, \ldots, q_{2g - 2} \}$$
as we wanted to show.

\end{proof}
  Hence, the curve $\Gamma$ is contracted by the map $h_N$ to a point $P_N \in \MGIT$. The point $P_N$ represents a hyperelliptic curve $C_N$ of genus $g-1$ together with an ordering of the Weierstrass points $N$ on the rational curve $\Gamma$.

  Let us now illustrate the geometric situation by explaining in detail the first case in low genus.

{}\section{The case $g = 3$} \label{sec:g3}
  Let $C$ be a hyperelliptic curve of genus 3. 
  In this setting, we have that the map $\theta$ factors through the involution $i^*$, and embeds the quotient $\SU / {\langle i^* \rangle }$ in $\P^7 = |2 \Theta|$ as a quadric hypersurface (see \cite{beauville_rang2} and \cite{desale_ramanan}).
  Let $D$ be a general effective divisor of degree 3. 
  The projective space $\P^7_D$, as defined in Section \ref{sec:introduction},  parametrizes the extension classes in $\Ext^1(\O(D), \O(-D))$.
  The classifying map $\p$ is given in this case by the complete linear system $|\I_C^2(3)|$ of cubics vanishing on $C$ with multiplicity 2. According to Proposition \ref{prop:linear_systems_coincide}, this linear system coincides with the linear system $|\I_{\Sec^1}(3)|$ of cubics vanishing along the secant lines of $C$. Recall from Section \ref{sec:gamma} that among these lines we have the  secant lines $l$ passing through involution-conjugate points. These form a pencil given by the linear system $|h|$.

  The image of the projection of $\theta(\genSU)$ from $\P_c = \P^3 \subset |2 \Theta|$ is also a $\P^3$, that is identified with $|2 D|$ by Theorem \ref{th:proyeccion}. Let $N \in |2D|$ be a generic reduced divisor. By Proposition \ref{prop:fibre=image}, the closure of the fiber $\pproj^{-1}(N)$ is the image via $\p$ of the $\PcuatroN$ spanned by the six points of $N$. 

\subsection{The restriction to $\P_N^4$} \label{sec:factor_g3}
  The base locus of the restriction map $\pPN = \pPcuatroN$ is $\Sec^N = \Sec^1(C) \cap \P^4_N$ by Lemma \ref{lem:base_locus}. 
  The secant variety $\Sec^1(N) \subset \Sec^N$ is the union of the 15 lines passing through pairs of the 6 points of $N$. According to Lemma \ref{lemma:nonempty_intersection}, the further base locus $\Sec^N \setminus \Sec^1(N)$ is given by the intersections of $\P^4_N$ with the lines spanned by degree 2 divisors $L_2$ on $C$ not contained in $N$ satisfying $\dim|L_2| \geq 1$. By Brill-Noether theory, there is only one linear system of such divisors on a genus 3 curve, namely the hyperelliptic linear system $|h|$ (see, for example, \cite{arbarello_cornalba}, Chapter V). We will review these ideas in Section \ref{sec:lowgenera}). This linear system defines, by the intersections with $\PN$ of the lines spanned by the hyperelliptic pencil, the curve $\Gamma$ that we introduced in Section \ref{sec:gamma}.
  Hence, we have that $\Sec^N = \{15 \text{ lines} \} \cup \Gamma$, and the restriction map $\pPN$ factors as
\begin{equation*}
  \begin{tikzcd}
    \P^4_N \arrow[r,dashed,"h_N"] \arrow[rd,swap,dashed,"\pPN"] & \MGITsix \subset \P^4 \arrow[d,dashed,"p"] \\
    &                                           \P^{3}
  \end{tikzcd}
\end{equation*}
where $h_N$ is the map defined by the complete linear system $|\I_{\Sec^1(N)}(3)|$ of cubics vanishing along the 15 lines defined by the points of $N$, and $p$ is the projection with center the image via $h_N$ of the further base locus $\Gamma$.
  According to Theorem \ref{th:RNC}, the image of $h_N$ is the GIT moduli space $\MGITsix$ if $N$ is generic and reduced. It is a classic result that this moduli space is embedded in $\P^4$ as the Segre cubic $S_3$ (see for instance \cite{dolgachev_ortland}). This 3-fold arises by considering the linear system of quadrics in $\P^3$ that pass through five points in general position, thus it is isomorphic to the blow-up of $\P^3$ at these points, followed by the blow-down of all lines joining any two points.
  The curve $\Gamma \subset \P^4_N$ is a rational normal curve by Lemma \ref{lem:gamma}, hence $\Gamma$ is contracted to a point $P$ by $h_N$ again by Theorem \ref{th:RNC}.

  By \cite{bertram} and Lemma \ref{lem:base_locus}, the linear system $|\OO_{S_3}(1)|$ of hyperplanes in $S_3$ is pulled back by $h_N$ to $|\I_{\Sec^1(N)}(3)|$ on $\PcuatroN$. The linear system $|\OO_{S_3}(1) - P|$ of hyperplanes in $S_3$ passing through $P$ is pulled back to the complete linear system $|\I_{\Sec^N}(C)|$ defining $\pPN$. 
  Hence, the map $p$ is the linear projection with center $P$.
  Since $S_3$ is a cubic, the projection $p$ is a 2:1 map. We will see in the next Section that this will be also the case for higher genus.

  The point $P$ in $\MGITsix$ represents a rational curve with 6 marked points.
  Let $C'$ be the hyperelliptic genus 2 curve constructed as the 2:1 cover of this rational curve ramified in these 6 points.
  According to Theorem 4.2 of \cite{kumar}, the Kummer variety $\Kum(C')$ is contained in the image of $p$, and it is precisely the branching locus of $\pi$.

  Let us recall that, by definition, the map $\p$ factors globally through the 2:1 map $\theta$ introduced in Section \ref{sec:moduli}, and the preimages of the map $\theta$ are of the form $E$, $i^*E$. We will see in Section \ref{sec:general_case} that the map $p$ defined above is exactly the restriction of the map $\theta$ to the image of $\P_N^4$ by $f_D$. 
  In particular, the ramification locus of the map $\theta$ is birational to a fibration over $|2D| \cong \P^3$ in Kummer varieties of dimension 2.
  Moreover, these facts also hold for higher genus, except that we will have to compose with a birational morphism (see Theorem \ref{th:fibration_kummer}). 

\subsection{The global map $\p$}

  In the genus 3 setting, the linear system $|2D|$ is a $\P^3$. 
  By Proposition \ref{prop:fibre=image}, the image of $\PcuatroN$ by $\p$ is the closure of the fiber $\pproj^{-1}(N)$. 
  For each point $N$ in $|2D|$, this image is $\P^3 = |\I^2_C(3)|^*$, which is the image of the Segre variety $\MGITsix$ under the projection with center $P$.
   Thus, the image of the global map $\p$ birational to a $\P^3$-blundle over $|2D| = \P^3$. In fact, this image is also a quadric hypersurface in $\P^7$ \cite{desale_ramanan}.

{}\section{The general case} \label{sec:general_case}
  Now let us describe the geometric situation for a hyperelliptic curve $C$ of arbitrary genus.
  We start by showing that the factorization of the restriction maps $\pPN$ outlined in the lasts Sections holds also in the global setting.
  
\subsection{The global factorization} \label{sec:Stein_factorization}
  The map $\p$ is not everywhere defined, since some of the bundles sitting in the sequences classified by $\PD$ are unstable.
  In \cite{bertram}, the author constructs the resolution $\widetilde{\p}$ of the map $\p$ as a sequence of blow-ups
\begin{equation*} 
  \begin{tikzcd}
    \widetilde{\PD} \arrow[d, "\bl_{g-1}", swap]   \arrow[rdd, "\widetilde{\p}"] \\
    \vdots \arrow[d, "\bl_1", swap] \\
    \PD  \arrow[r, dashed, "\p"] & {|}2 \Theta {|}
  \end{tikzcd}
\end{equation*} 
along certain secant varieties $C = \Sec^0(C) \subset \Sec^1(C) \subset \cdots \subset \Sec^{g-1}(C) \subset \PD$.
  This chain of morphisms is defined inductively as follows:
  the center of the first blow-up $\bl_1$ is the curve $C = \Sec^0(C)$. 
  For $k = 2, \ldots, g-1$,
  the center of the blow-up $\bl_k$ is the strict transform of the secant variety $\Sec^{k-1}(C)$ under the blow-up $\bl_{k-1}$.
  
  Recall that the map $\p$ is the composition of the classifying map $f_D$ and the 2:1 map $\theta$. Thus, the map $f_D$ lifts to a map $\widetilde{f_D}$ which makes the following diagram commute:
\begin{equation} \label{diag:global}
  \begin{tikzcd}
    \widetilde{\PD} \arrow[r,"\widetilde{f_D}"] \arrow[rd,swap,"\widetilde{\p}"] & \SU \arrow[d,"\theta"] \\
    &                               {|}2 \Theta {|}            
  \end{tikzcd}
\end{equation}

  When $C$ has genus 3 we have already described a factorization of the restriction map $\widetilde{\p}|_{\PN}$, that was constructed in Section \ref{sec:factor_g3}.
  We will show in the next section that the factorization of Section \ref{sec:factor_g3} is actually the fiberwise version of Diagram \ref{diag:global}, i.e. when we fix $N \in |2 D|$ generic and we consider the restriction $\pPN$. Recall that the image of $\widetilde{f_D}|_{\PN}$ inside $\SU$ is the fiber of $\pproj$ over $N$ (see Theorem \ref{th:proyeccion} and Proposition \ref{prop:fibre=image}).
  This remains true for $g \geq 4$, as we will see after extending the description of Section \ref{sec:factor_g3} to higher genus.

\subsection{Osculating projections}
  We recall here a generalization of linear projections that will allow us to describe the map $p$ in higher genus. For a more complete reference, see for example \cite{massarenti_rischter}.
  Let $X \subset \P^N$ be an integral projective variety of dimension $n$, and $p \in X$ a smooth point. Let
\begin{align*}
  \phi: \mathcal{U} \subset \mathbb{C}^n &\longrightarrow \mathbb{C}^N \\
  (t_1, \ldots, t_n) &\longmapsto \phi(t_1, \ldots, t_n)
\end{align*}
be a local parametrization of $X$ in a neighborhood of $p = \phi(0) \in X$. For $m \geq 0$, let $O^m_p$ be the affine subspace of $\mathbb{C}^N$ passing through $p \in X$ and generated by the vectors $\phi_I(0)$, where $\phi_I$ is a partial derivative of $\phi$ of order $\leq m$.  

  By definition, the \emph{$m$-osculating space} $T_p^m X$ of $X$ at $p$ is the projective closure in $\P^N$ of $O^m_p$. 
  The \emph{$m$-osculating projection} $$\Pi_p^m:X \subset \P^N \dashrightarrow \P^{N_m}$$ is the corresponding linear projection with center $T_p^m$.

\subsection{The further base locus of $\pPN$ and the map $\pi_N$} \label{sec:furtherbaselocus} 
  We define the \emph{further base locus} of $\pPN$ as the set
\begin{align*}
  {\Sec^N}' := \Sec^N \setminus \{ \Gamma \cup \Sec^{g-2}(N) \}.
\end{align*}
  This locus is non-empty for $g \geq 4$ due to the existence of effective divisors $L_{g-1}$ in the conditions of Lemma \ref{lemma:nonempty_intersection}, as we will see in Section \ref{sec:lowgenera}.

\begin{lemma} \label{lemma:vanishing_forms}
  Let $Q$ be a $r$-form in $\P^n$ vanishing at the points $P_1$ and $P_2$ with multiplicity $l_1$ and $l_2$ respectively. Then, $Q$ vanishes on the line passing through $P_1$ and $P_2$ with multiplicity at least $l_1 + l_2 - r$.
\end{lemma}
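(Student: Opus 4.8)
The plan is to reduce everything to a count of monomial exponents after a convenient choice of coordinates, since the statement is purely about how the two pointwise vanishing conditions interact along the line they span. First I would choose homogeneous coordinates $x_0, \ldots, x_n$ on $\P^n$ so that $P_1 = [1:0:\cdots:0]$ and $P_2 = [0:1:0:\cdots:0]$. Then the line $\ell$ through $P_1$ and $P_2$ is cut out by $x_2 = \cdots = x_n = 0$, and its homogeneous ideal is $I_\ell = (x_2, \ldots, x_n)$. By definition, $Q$ vanishes along $\ell$ with multiplicity $m$ exactly when $Q \in I_\ell^m$, i.e. when every monomial $x^\alpha$ occurring in $Q$ satisfies $\alpha_2 + \cdots + \alpha_n \geq m$. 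Hence the multiplicity of $Q$ along $\ell$ equals $\min_\alpha(\alpha_2 + \cdots + \alpha_n) = r - \max_\alpha(\alpha_0 + \alpha_1)$, where the extrema are taken over the monomials appearing in $Q$ and we have used $|\alpha| = r$.

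Next I would translate the two hypotheses into constraints on these exponents. Working in the affine chart $x_0 = 1$, the condition that $Q$ vanishes at $P_1$ with multiplicity $l_1$ says that the dehomogenization $Q(1, x_1, \ldots, x_n)$ has no monomial of degree $< l_1$; since a monomial $x^\alpha$ of $Q$ dehomogenizes to $x_1^{\alpha_1} \cdots x_n^{\alpha_n}$ of degree $r - \alpha_0$, this is precisely $\alpha_0 \leq r - l_1$ for every monomial occurring in $Q$. By the symmetric argument in the chart $x_1 = 1$, vanishing at $P_2$ to order $l_2$ gives $\alpha_1 \leq r - l_2$. Adding these, $\alpha_0 + \alpha_1 \leq 2r - l_1 - l_2$ for every monomial of $Q$.

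Combining the two paragraphs then yields the multiplicity of $Q$ along $\ell$, namely $r - \max_\alpha(\alpha_0 + \alpha_1) \geq r - (2r - l_1 - l_2) = l_1 + l_2 - r$, which is the assertion (the bound being vacuous when $l_1 + l_2 \leq r$). I do not expect a genuine obstacle: the only point requiring care is the bookkeeping that identifies ``vanishing at a point with multiplicity $l$'' with the bound $\alpha_0 \leq r - l$ on the exponent of the coordinate concentrated at that point, together with the observation that this reading is intrinsic (the order of vanishing at a point is coordinate-independent). It is worth noting that the tempting one-line alternative — restricting $Q$ to $\ell$ to get a binary form of degree $r$ divisible by $s^{l_2} t^{l_1}$ — only recovers the case $m \geq 1$ (that $Q|_\ell \equiv 0$ when $l_1 + l_2 > r$) and is not by itself strong enough to produce the full multiplicity bound; this is exactly why the exponent count above is the right level of generality.
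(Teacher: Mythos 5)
Your proof is correct. Note that the paper does not actually prove this lemma: its ``proof'' is a one-line citation to \cite{kumar_linearsystems}, so there is no argument in the text to compare against. Your self-contained version does the right thing: choosing coordinates with $P_1=[1:0:\cdots:0]$, $P_2=[0:1:0:\cdots:0]$, identifying the multiplicity along $\ell$ with $\min_\alpha(\alpha_2+\cdots+\alpha_n)=r-\max_\alpha(\alpha_0+\alpha_1)$ (which is legitimate because for a linear subspace the symbolic and ordinary powers of the ideal coincide, so the monomial criterion really does compute the generic order of vanishing along $\ell$), and then reading the two pointwise conditions as $\alpha_0\le r-l_1$ and $\alpha_1\le r-l_2$ on every monomial. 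Since both bounds hold simultaneously for each individual monomial, adding them and subtracting from $r$ gives exactly $l_1+l_2-r$. Your closing remark is also well taken: the naive restriction-to-the-line argument only detects whether $Q|_\ell\equiv 0$ and cannot produce the higher multiplicity bound, which is what the paper actually needs (it applies the lemma with $l_1=l_2=g-1$, $r=g$ to conclude vanishing to order $g-2$ along the hyperelliptic secants). The only cosmetic caveat is that you should state explicitly, as you do implicitly, that the conclusion is vacuous when $l_1+l_2\le r$.
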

\begin{proof}
  See, for example, \cite[page 2]{kumar_linearsystems}.
\end{proof}
 
  Let $\N_1 := |\I_{\Sec^N}(g)|$ be the linear system that defines $\pPN$. The forms in $\N_1$ vanish with multiplicity $g - 1$ along the points of $C$ (see Lemma \ref{prop:linear_systems_coincide}). By Lemma \ref{lemma:vanishing_forms}, these forms vanish then with multiplicity $(g-1) + (g-1) - g = g-2$ along the secant lines $l$ cutting out the hyperelliptic pencil. Thus, these forms vanish with multiplicity $g-2$ on the curve $\Gamma$. 
  Let us also define the linear system $\N_3 := |\I_{\Sec^{g-2}(N)}(g)|$, and consider the partial linear system $\N_2 \subset \N_3$ of forms vanishing (with multiplicity 1) along $\Sec^{g-2}(N)$, and with multiplicity $g - 2$ on $\Gamma$. By our previous observation, we have the following inclusions of linear systems:
\begin{align*}
  \N_1 \subset \N_2 \subset \N_3.
\end{align*}
  These inclusions yield a factorization 
\begin{equation} \label{diag:Kumar}
  \begin{tikzcd}
    \PN \arrow[r,dashed,"h_N"] \arrow[rrd,swap,dashed,"\pPN"] & \MGIT \subset {\N_3}^*  \arrow[r,dashed,"\pi_N"] &  {\N_2}^* \arrow[d,dashed,"l_N"] \\
    &                       &                    {\N_1}^*
  \end{tikzcd}
\end{equation} 
  The first map $h_N$ is the one defined in Section \ref{sec:h_N}, its image is the moduli space $\MGIT$.
  According to Theorem \ref{th:RNC}, this map contracts the curve $\Gamma$ to a point $h_N(\Gamma)$. 
  The map $\pi_N$ coincides with the map $p$ defined in Section \ref{sec:factor_g3} when $g = 3$.

\begin{proposition} \label{prop:fact_restringida}
  The map $\pi_N$ is the $(g-3)$-osculating projection $\Pi^{g-3}_{P}$ with center the point $P = h_N(\Gamma)$.
\end{proposition}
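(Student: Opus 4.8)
The plan is to identify the two maps by exhibiting both as the linear projection of $\MGIT \subset \N_3^*$ from the \emph{same} center. By construction $\pi_N$ is the projection $\N_3^* \dashrightarrow \N_2^*$ induced by the inclusion of linear systems $\N_2 \subset \N_3$, so its center is the linear subspace $\operatorname{Ann}(\N_2) \subset \N_3^*$ annihilating the subsystem $\N_2$. On the other hand $\Pi^{g-3}_P$ is, by definition, the projection with center the osculating space $T^{g-3}_P(\MGIT)$. Hence it suffices to prove the equality of linear subspaces $\operatorname{Ann}(\N_2) = T^{g-3}_P(\MGIT)$, after which both maps are the projection from this common center and their targets are canonically identified. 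I first record that $P$ is a smooth point of $\MGIT$: it represents the $2g$ distinct generic points of $N$ on the rational normal curve $\Gamma \cong \P^1$ (Lemma \ref{lem:gamma}), a stable configuration with trivial stabiliser, so the osculating space is well defined.

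The main tool is the classical duality between osculating spaces and vanishing conditions for the embedding linear system. Since $\MGIT \subset \N_3^*$ is embedded by the complete system $\N_3$, a form $F \in \N_3$ cuts out a hyperplane $H_F$, and at a smooth point $x$ one has $H_F \supseteq T^m_x(\MGIT)$ if and only if $F|_{\MGIT}$ vanishes to order $\geq m+1$ at $x$; intersecting over all such $F$ yields $T^m_x(\MGIT) = \operatorname{Ann}(W_{m+1})$ with $W_{m+1} = \{F \in \N_3 : \operatorname{mult}_x(F|_{\MGIT}) \geq m+1\}$. Applying this at $x = P$ with $m = g-3$, the proposition reduces to the identification of subsystems $\N_2 = \{F \in \N_3 : \operatorname{mult}_P(F|_{\MGIT}) \geq g-2\}$. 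Since $\N_2$ consists precisely of the forms of $\N_3$ vanishing to order $\geq g-2$ along $\Gamma$, I must therefore show that $\operatorname{mult}_\Gamma(h_N^* F) = \operatorname{mult}_P(F|_{\MGIT})$ for every $F \in \N_3$.

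This is where I use that $h_N$ contracts $\Gamma$ to $P$. By Proposition \ref{prop:fibration_in_M}, $h_N$ is birationally the universal family of rational normal curves over $\MGIT$, hence flat with reduced one–dimensional fibres; as $\Gamma$ is the reduced fibre over the smooth point $P$, at a general point $y_0 \in \Gamma$ the map $h_N$ is a morphism and a submersion, with $\ker dh_N(y_0) = T_{y_0}\Gamma$. Taking a local slice $S$ through $y_0$ transverse to $\Gamma$, of dimension $2g-3 = \dim \MGIT$, the restriction $h_N|_S$ is then a local isomorphism onto a neighbourhood of $P$. Computing the multiplicity along the smooth curve $\Gamma$ on the transverse slice gives
\begin{align*}
  \operatorname{mult}_\Gamma(h_N^* F) = \operatorname{ord}_{y_0}\!\big((h_N^* F)|_S\big) = \operatorname{ord}_{y_0}\!\big((h_N|_S)^*(F|_{\MGIT})\big) = \operatorname{mult}_P(F|_{\MGIT}),
\end{align*}
the last step because $h_N|_S$ is a local isomorphism carrying $y_0$ to $P$. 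This proves the identity of subsystems, whence $\operatorname{Ann}(\N_2) = T^{g-3}_P(\MGIT)$ and $\pi_N = \Pi^{g-3}_P$.

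I expect the main obstacle to be the submersion claim, that is, controlling the local structure of the contraction $h_N$ along $\Gamma$ and in particular ruling out that $\Gamma$ lies in the ramification locus of $h_N$. This is exactly what guarantees that the pullback $h_N^* F$ acquires no \emph{extra} vanishing along $\Gamma$, so that the two multiplicities are equal rather than merely related by an inequality. The cleanest justification appeals to the universal–family description of $h_N$, which makes $\Gamma$ a reduced fibre over a smooth point, combined with generic smoothness in characteristic zero; alternatively the same equality can be extracted from an explicit local parametrisation of $h_N$ near a general point of $\Gamma$, using that $\Gamma$ is itself a rational normal curve fibre of the fibration (Lemma \ref{lem:gamma}).
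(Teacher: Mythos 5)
Your proof is correct and follows the same route as the paper's: both identify $\N_2$ with the subsystem of $\N_3$ cutting out the hyperplanes osculating $\MGIT$ to order $g-3$ at $P$, via the multiplicity-$(g-2)$ vanishing along $\Gamma$. The paper's proof is a three-line sketch that simply asserts this correspondence, whereas your transverse-slice argument establishing $\operatorname{mult}_\Gamma(h_N^*F)=\operatorname{mult}_P(F|_{\MGIT})$ supplies exactly the justification it leaves implicit.
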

\begin{proof}
  From the definition of the linear systems $\N_2$ and $\N_3$, the base locus of the map $\pi_N$ is the point $h_N(\Gamma)$ . In particular, the map $\pi_N$ is an osculating projection of some order with respect to this point. Since the forms in $\N_2$ vanish with multiplicity $g - 2$ along $\Gamma$, the order the projection $\pi_N$ is $g-3$.  
\end{proof}

\subsection{The Kumar factorization} \label{sec:kumarfactorization}
  By the results of Section \ref{sec:h_N}, the map $h_N$ contracts the curve $\Gamma$ to a point $P_N$ in $\MGIT$ representing an ordered configuration of the $2g$ marked points $N$. This point in turn corresponds to a hyperelliptic genus $(g - 1)$ curve $C_N$ together with an ordering of the Weierstrass points.
  In the paper \cite{kumar}, Kumar describes a generalization of the construction outlined in Section \ref{sec:furtherbaselocus}. Let $\Omega$ be the linear system of $(g-1)$-forms in $\P^{2g-3}$ vanishing with multiplicity $g-2$ at $2g - 1$ general points $e_1, \ldots, e_{2g - 1} \in \P^{2g-3}$. Kumar shows that the rational map $i_{\Omega}$ induced by $\Omega$ maps birationally $\P^{2g - 3}$ onto the GIT quotient $\MGIT$. Furthermore, let $w = i_\Omega(e_0)$ be a general point in $\MGIT$. This point represents a hyperelliptic curve $C_w$ of genus $g-1$, together with an ordering of the Weierstrass points. The partial linear system $\Lambda \subset \Omega$ of $(g-1)$-forms vanishing with multiplicity $g-2$ at all points $e_0, \ldots, e_{2g - 1} \in \P^{2g-3}$ induces a rational projection $\kappa:\MGIT \to |\Lambda|^*$, and Kumar shows that this map is 2:1 onto a connected component of the moduli space $\SUw$ of rank 2 semistable vector bundles with trivial determinant over the curve $C_w$. Furthermore, Kumar proves that the map $\kappa$ is ramified along the Kummer variety $\Kum(C_w) \subset \SUw$:
\begin{equation*}
  \begin{tikzcd}
    \P^{2g-3} \arrow[r,dashed,"i_\Omega"] \arrow[rd,swap,dashed,"i_\Lambda"] & \MGIT\arrow[d,dashed,"\kappa"] \\
    &                                          \SUw 
  \end{tikzcd}
\end{equation*}

\begin{theorem}  \label{th:pi=kappa}
  The map $\pi_N$ coincides with the Kumar map $\kappa$. In particular, the map $\pi_N$ is 2:1.
\end{theorem}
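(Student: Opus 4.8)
The plan is to show that the two maps $\pi_N$ and $\kappa$ are literally the same osculating projection, by matching the geometric data on both sides. On one side, Proposition \ref{prop:fact_restringida} tells us that $\pi_N$ is the $(g-3)$-osculating projection of the Segre-type variety $\MGIT \subset {\N_3}^*$ centered at the point $P = h_N(\Gamma)$. On the other side, Kumar's construction realizes $\kappa$ as a rational projection from $\MGIT$ whose center is determined by the partial linear system $\Lambda \subset \Omega$ of $(g-1)$-forms on $\P^{2g-3}$ vanishing to order $g-2$ at the extra point $e_0$. First I would verify that both constructions see the \emph{same} ambient embedding of $\MGIT$: Kumar embeds $\MGIT$ via the complete linear system $\Omega = |\I(g-1)|$ of $(g-1)$-forms vanishing to order $g-2$ at the $2g-1$ points $e_1,\dots,e_{2g-1}$, while here $\MGIT$ sits in ${\N_3}^* = |\I_{\Sec^{g-2}(N)}(g)|^*$. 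The key identification is that under $h_N$, the $2g$ marked points of $N$ correspond to the base points of Kumar's linear system, so that the very ample system defining $\MGIT$ in both pictures agrees up to the isomorphism $i_\Omega$.

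The central step is then to check that the osculating data coincide. In Kumar's setup the center of $\kappa$ is the $(g-2)$-fold point $w = i_\Omega(e_0)$, realized as forms vanishing to order $g-2$ at $e_0$. In our setup the center of $\pi_N$ is the point $P = h_N(\Gamma)$, and the vanishing order imposed there is exactly $g-2$ along $\Gamma$ (this is precisely what was computed via Lemma \ref{lemma:vanishing_forms} and used in Proposition \ref{prop:fact_restringida}). The heart of the argument is to show that the point $P = h_N(\Gamma)$ corresponds under $i_\Omega^{-1}$ to Kumar's extra point $e_0$, and that the osculating condition ``multiplicity $g-2$ along $\Gamma$'' translates under $h_N$ into ``multiplicity $g-2$ at $e_0$''. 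This follows because $\Gamma$ is a rational normal curve through the $2g$ points $N$ (Lemma \ref{lem:gamma}), so $h_N$ contracts it to a single point of $\MGIT$ representing the hyperelliptic genus $(g-1)$ curve $C_N$; and the infinitesimal structure that $\pi_N$ projects away is exactly the $(g-3)$-osculating space at that point, which matches the $(g-2)$-multiplicity condition defining $\Lambda$.

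Concretely, I would argue: the two linear systems $\N_2 = |\Lambda|$ (under the identification $i_\Omega$) and the system cutting out $\pi_N$ must coincide because they are both characterized as the subsystem of the complete system on $\MGIT$ consisting of sections vanishing to the prescribed order $g-2$ at the single point $P = w$. Since $\N_2 \subset \N_3$ is defined by precisely this vanishing condition, and $\Lambda \subset \Omega$ is defined by the identical condition at $e_0 = i_\Omega^{-1}(P)$, the compatibility $i_\Omega^* \Omega = \N_3$ forces $i_\Omega^* \Lambda = \N_2$, hence $\pi_N = \kappa \circ i_\Omega$ up to the fixed isomorphism. The final assertion that $\pi_N$ is $2{:}1$ is then immediate from Kumar's theorem that $\kappa$ is $2{:}1$.

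The main obstacle I expect is the precise bookkeeping in the identification $i_\Omega^{-1}(P) = e_0$, i.e. verifying that the point $P$ to which $h_N$ contracts $\Gamma$ genuinely plays the role of Kumar's distinguished extra point rather than one of the $2g-1$ base points $e_1,\dots,e_{2g-1}$. This requires checking that $P$ is \emph{general} with respect to the configuration coming from $N$ (so that Kumar's genericity hypotheses apply), and that the hyperelliptic curve $C_N$ attached to $P$ matches the curve $C_w$ in Kumar's statement. This is a matching of moduli-theoretic data rather than a computation, but it is where the geometry of the hyperelliptic pencil defining $\Gamma$ must be reconciled with the abstract point-configuration picture of $\MGIT$.
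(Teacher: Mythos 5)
Your proposal follows essentially the same route as the paper's proof: both identify $\pi_N$ and $\kappa$ by observing that each is induced by the linear system of hyperplane sections of $\MGIT$ vanishing with multiplicity $g-2$ at the distinguished point $P = h_N(\Gamma) = w$, using that $h_N$ pulls back $|\mathcal{O}_{\MGIT}(1)|$ to $\N_3$ and the multiplicity-$(g-2)$ condition along $\Gamma$ to the corresponding condition at $P$. Your additional attention to the identification $i_\Omega^{-1}(P) = e_0$ and the genericity of $P$ is a reasonable elaboration of a point the paper leaves implicit, but it does not change the argument.
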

\begin{proof}
  Let $\mathcal{S} = \MGIT$. As in Section \ref{sec:factor_g3}, the linear system $|{\mathcal{O}}_{\mathcal{S}}(1)|$ of hyperplanes in $\mathcal{S}$ is pulled back by $h_N$ to the linear system $\N_3 = |\I_{\Sec^{g-2}(N)}(g)|$. 
  According to Proposition \ref{prop:fact_restringida}, the linear system $|\OO_{\mathcal{S}}(1) - (g - 2)P|$ of hyperplanes in $\mathcal{S}$ vanishing in $P$ with multiplicity $g-2$ is pulled back to the linear system $\N_2$ defining $\pi_N \circ h_N$. 
  Recall that the map $\kappa$ is also given by the linear system of hyperplanes in $\mathcal{S}$ vanishing in $P$ with multiplicity $g-2$. In particular, the map $\kappa$ coincides exactly with the map $\pi_N$ up to birationality.
\end{proof}
  
  We will show in the next Section that the map $l_N$ is actually birational, and that the map $\pi_N$ coincides with the restriction of the map $\theta$.

\subsection{The global description} \label{sec:the_global_description}

  The resolution map $\widetilde{\p}$ of $\p$ factors through the map $\theta$ as shown in Diagram \ref{diag:global}. When restricted to $\PN$ for a divisor $N$, we have also shown that the restriction map $\pPN$ factors through the 2:1 map $\pi_N$. Now we link these two factorizations:
\begin{theorem}
  Let $N \in |2D|$ be a generic effective divisor. Then, the restriction map $\theta|_{f_D\left(\PN\right)}$ is the map $\pi_N$ modulo composition with a birational map. 
\end{theorem}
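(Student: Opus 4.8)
The plan is to compare two factorizations of the restriction map $\pPN$---the fiberwise version of the global factorization in Diagram \ref{diag:global} and the Kumar factorization in Diagram \ref{diag:Kumar}---and to read off the statement from a count of generic fiber cardinalities.

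First I would set up the source-side comparison. Restricting $\p=\theta\circ f_D$ to $\PN$ gives $\pPN=(\theta|_{f_D(\PN)})\circ(f_D|_{\PN})$, where by Theorem \ref{th:proyeccion} and Proposition \ref{prop:fibre=image} the image $f_D(\PN)$ is the fiber $p_D^{-1}(N)$, birational to $\M_{0,2g}$. On the other hand, Diagram \ref{diag:Kumar} expresses the same map as $\pPN=l_N\circ\pi_N\circ h_N$. The crucial point I would establish is that $f_D|_{\PN}$ and $h_N$ have the same fibers: by Proposition \ref{prop:fibration_in_M} and Theorem \ref{th:RNC} both are dominant rational maps whose fibers are exactly the rational normal curves through the $2g$ points $N$, with targets $p_D^{-1}(N)\cong\M_{0,2g}$ and its GIT compactification $\MGIT$ respectively. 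This produces a birational map $\beta:f_D(\PN)\dashrightarrow\MGIT$ with $h_N=\beta\circ(f_D|_{\PN})$.

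Next I would cancel the dominant map. Substituting $h_N=\beta\circ(f_D|_{\PN})$ into the two factorizations gives
\begin{equation*}
  (\theta|_{f_D(\PN)})\circ(f_D|_{\PN})=(l_N\circ\pi_N\circ\beta)\circ(f_D|_{\PN}),
\end{equation*}
and since $f_D|_{\PN}$ is dominant I can cancel it to obtain the identity of rational maps $\theta|_{f_D(\PN)}=l_N\circ\pi_N\circ\beta$, whose factors are now all generically finite. Then I would count degrees. The map $\theta$ is $2{:}1$, and for generic $E$ both $E$ and $i^*E$ lie in $f_D(\PN)=p_D^{-1}(N)$, because $\pproj(\theta(i^*E))=\pproj(\theta(E))$ forces $p_D(i^*E)=p_D(E)=N$; since $i^*E\neq E$ generically, $\theta|_{f_D(\PN)}$ is again $2{:}1$. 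The map $\beta$ is birational, and $\pi_N$ is $2{:}1$ by Theorem \ref{th:pi=kappa}. Multiplicativity of degrees then forces $2=\deg(l_N)\cdot 2\cdot 1$, so $l_N$ is birational.

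Finally, with $l_N$ and $\beta$ both birational, the identity $\theta|_{f_D(\PN)}=l_N\circ\pi_N\circ\beta$ exhibits $\theta|_{f_D(\PN)}$ as $\pi_N$ composed with birational maps on source and target, which is the claim. I expect the main obstacle to be the construction of $\beta$: one must check rigorously that $f_D|_{\PN}$ and $h_N$ share exactly the same fibers, compatibly with the identifications $f_D(\PN)=p_D^{-1}(N)\cong\M_{0,2g}$ and its compactification $\MGIT$. Once $\beta$ is in hand, the degree bookkeeping is immediate and simultaneously yields the birationality of $l_N$ (promised in the paragraph preceding the theorem) and the desired coincidence of $\theta|_{f_D(\PN)}$ with $\pi_N$.
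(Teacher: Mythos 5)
Your proposal is correct and takes essentially the same route as the paper: the paper phrases the comparison of the two factorizations of $\pPN$ as uniqueness of the Stein factorization (connected rational-normal-curve fibers of $\widetilde{f_D}|_{\PN}$ and $h_N$, finite parts $\theta$ and $l_N\circ\pi_N$ both of degree $2$), while you make the same comparison explicit by constructing $\beta$ and doing the degree bookkeeping. Your check that $E$ and $i^*E$ lie in the same fiber of $p_D$ --- needed for $\theta|_{f_D(\PN)}$ to remain $2{:}1$ rather than injective --- is a detail the paper leaves implicit.
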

\begin{proof}
  Let us place ourselves on the open set $\genSU \subset \SU$ of general stable bundles. One observes that the factorization $\widetilde{\p} =\theta \circ \widetilde{f_D}$ of Diagram \ref{diag:global} is the Stein factorization of the map $\widetilde{\p}$ along $\widetilde{\PD}$. 
  Indeed, the map $\theta$ is 2:1 as explained in Section \ref{sec:introduction}. Moreover, the preimage of a generic stable bundle $E$ by the map $f_D$ is the $\P^1$ arising as the projectivisation of the space of extensions of the form
\begin{align*}
  e: \quad  0 \to \O(-D)) \to E \to \O(D) \to 0.
\end{align*}
  In particular, the fibers of $\widetilde{f_D}$ over $\genSU$ are connected.  

  The restriction of $\widetilde{\p}$ to $\PN$ factors through the maps $h_N$ and $\pi_N$ (see Diagram \ref{diag:Kumar}), followed by the map $l_N$. 
  According to Theorem \ref{th:RNC}, the fibers of $h_N$ are rational normal curves, thus connected. Moreover, the map $\pi_N$ is 2:1 by Theorem \ref{th:pi=kappa}. 
  By unicity of the Stein factorization, we have our result. 

  Comparing with the factorization  $\widetilde{\p} =\theta \circ \widetilde{f_D}$, we see that $l_N$ cannot have relative dimension $> 0$. Hence, $l_N$ is a finite map. Since the degree of the map $\theta$ in the Stein factorization is 2, which is equal to the degree of $\pi_N$, we have that $l_N$ cannot have degree $> 1$.
  In particular, we have that the map $l_N$ is a birational map. 
\end{proof}

  From this description and the arguments of Section \ref{sec:kumarfactorization} yields the following result:
\begin{theorem} \label{th:fibration_kummer}
  The ramification locus of the map $\theta$ is birational to a fibration over $|2D| \cong \P^g$ in Kummer varieties of dimension $g - 1$.
\end{theorem}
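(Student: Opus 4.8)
The plan is to study the ramification locus $R \subset \SU$ of the 2:1 map $\theta$ fiberwise along the projection $\pproj : \SU \dashrightarrow |2D| \cong \P^g$ of Theorem \ref{th:proyeccion}, and then to reassemble the fibers into a single family. The first point I would record is that the fibration $\pproj$ is equivariant for the hyperelliptic involution $i^*$. Indeed, Theorem \ref{th:proyeccion} identifies the fiber through a general stable bundle $E$ with the divisor $\Delta(E) \in |2D|$, and the identity $\Delta(E) = \Delta(i^* E)$ established there shows that each general fiber $\overline{\pproj^{-1}(N)}$ is stable under $i^*$. Since $\theta$ is exactly the quotient of $\SU$ by the involution $i^*$, its ramification locus $R$ is the fixed locus of $i^*$; by equivariance, $R \cap \overline{\pproj^{-1}(N)}$ is therefore the fixed locus of $i^*$ restricted to the fiber, which is precisely the ramification locus of the restricted 2:1 map $\theta|_{\overline{\pproj^{-1}(N)}}$.

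I would then identify this fiberwise ramification locus with a Kummer variety. By the preceding Theorem, for generic $N$ the restriction $\theta|_{f_D(\PN)}$ coincides with the osculating projection $\pi_N$ modulo composition with the birational map $l_N$, and by Theorem \ref{th:pi=kappa} the map $\pi_N$ is Kumar's 2:1 map $\kappa$ attached to the genus $g-1$ hyperelliptic curve $C_N$ determined by the configuration $N$ on the rational normal curve $\Gamma$. Kumar's result, recalled in Section \ref{sec:kumarfactorization}, states that $\kappa$ is ramified precisely along the Kummer variety $\Kum(C_N)$, which has dimension $g-1$. Since $l_N$ is birational it preserves ramification loci, so I conclude that $R \cap \overline{\pproj^{-1}(N)}$ is birational to $\Kum(C_N)$ for a generic divisor $N$.

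It remains to globalize. Because $\Kum(C_N)$ is nonempty, $R$ meets every general fiber of $\pproj$, and hence the restriction $\pproj|_R : R \dashrightarrow |2D|$ dominates the $g$-dimensional base $|2D| \cong \P^g$; its generic fiber is birational to the $(g-1)$-dimensional variety $\Kum(C_N)$. As $N$ varies, these fibers sweep out a family whose total space is birational to $R$, which exhibits $R$ as birational to a fibration over $\P^g$ in Kummer $(g-1)$-varieties, as claimed. The step I expect to demand the most care is the compatibility asserted in the first two paragraphs: one has to verify that the chain of birational identifications $f_D(\PN) \dashrightarrow \overline{\pproj^{-1}(N)}$ together with $l_N$ carries the fiberwise fixed locus of $i^*$ onto the ramification divisor of $\kappa$, so that the Kummer variety produced by Kumar's construction is genuinely the intersection $R \cap \overline{\pproj^{-1}(N)}$ and not merely a variety of the same dimension.
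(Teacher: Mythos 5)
Your proposal is correct and follows essentially the same route as the paper, which derives the theorem directly from the identification $\theta|_{f_D(\PN)} = \pi_N = \kappa$ (up to the birational map $l_N$) together with Kumar's result that $\kappa$ ramifies along $\Kum(C_N)$; your write-up simply makes explicit the fiberwise reduction along $\pproj$ and the $i^*$-equivariance that the paper leaves implicit. The compatibility issue you flag at the end is indeed the only point needing care, and it is settled by the fact that $\theta(E)=\theta(i^*E)$ forces the fibers of the fibration to be $i^*$-stable, exactly as you argue.
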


{}\section{Further base locus} \label{sec:lowgenera}
  Recall from Section \ref{sec:restriction_map} that the base locus of the restriction map $\pPN$ is the intersection $ \Sec^N = \Sec^{g-2}(C) \cap \PN$ of the secant variety of $(g-2)$-dimensional secant planes of $C$ with $\PN$. 
  The subvarieties $\Sec^{g-2}(N)$ and $\Gamma$ of $\Sec^N$ yield the factorization of $\pPN$ through the maps $h_N$ and $\pi_N$ of Proposition \ref{prop:fact_restringida}.
  Let us now describe the set of further base locus
\begin{align*}
  {\Sec^N}' = \Sec^N \setminus \{ \Gamma \cup \Sec^{g-2}(N) \}.
\end{align*}
  This set is empty for $g = 3$, hence the map $\pPN$ is exactly the composition of $h_N$ and $\pi_N$, as described in Section \ref{sec:g3}.
  In higher genus, the existence of non-empty additional base locus $\Sec^N$ corresponds to the fact that, in higher genus, the map $\pPN$ is not exactly the composition of the maps $h_N$ and $\pi_N$. In other words, the map $l_N$ is non-trivial in higher genus.

  This supplementary base locus is given by the intersections of $(g-2)$-dimensional $(g-1)$-secant planes of $C$ in $\PD$ with $\PN$ out of $\Sec^{g-2}(N)$ and $\Gamma$. According to Lemma \ref{lemma:nonempty_intersection}, these intersections are given by effective divisors $L_{g-1}$ on $C$ of degree $g-1$, not contained in $\PN$, and satisfying $\dim |L_{g-1}| \geq 1$. Also by Lemma \ref{lemma:nonempty_intersection}, we obtain $\dim (\langle L_{g-1} \rangle \cap \PN)=\dim |L_{g-1}| - 1$.

  We will now give account of the situation in low genera. 

\subsubsection*{Case $g = 4$} 
  In this case, the divisor $N$ is of degree 8 and the map
\begin{align*}
  \p|_N : \P_N^6 \subset \P_D^{10} \tto |2 \Theta| = \P^{15}
\end{align*} 
is given by the linear system $|\I_C^3(4)|$. This map factors through the map $\pi_N$ which coincides with the $1$-osculating projection $\Pi^1_P$, where $P = h_N(\Gamma)$.
  
  We are looking for degree 3 divisors $L_3$ with $\dim |L_3| \geq 1$. These satisfy all $\dim |L_3| = 1$ and are of the form 
\begin{align*}
  L_3 = h + q \qquad \text{for } q \in C,
\end{align*}
where $h$ is the hyperelliptic divisor. Let $p$ be a point of $C$. Then $L_3 = p + i(p) + q$. Since $\dim |L_3| = 1$, the secant plane $\P^2_{L_3}$ in $\P_D^{10}$ spanned by $p$, $i(p)$ and $q$ intersects $\P^6_N$ in a point. But this point necessarily lies in $\Gamma$, since the line passing through $p$ and $i(p)$ is already contained in this plane. Hence, we do not obtain any additional locus, and the map $l_N$ is the identity map. 

\subsubsection*{Case $g = 5$}
  In this case, the divisors $L_4$ of degree 4 are all of the form 
\begin{align*}
  L_4 = h + q + r \qquad \text{for } q, r \in C,
\end{align*}
and satisfy $\dim |L_4| = 1$. Thus, the corresponding secant $\P^3_{L_4}$ spanned by $p$, $i(p)$, $q$ and $r$ intersects $\P^{8}_N$ in a point. As before, this point lies in $\Gamma$, thus we do not obtain any additional locus.

\subsubsection*{Case $g = 6$}
  Here we have, as in the genus 5 case, the divisors of the form 
\begin{align*}
  L_3 = h + q \qquad \text{for } q \in C,
\end{align*}
which do not give rise to any additional base locus. But there is a new family of divisors 
\begin{align*}
  L_5 = 2h + r \qquad \text{for } r \in C.
\end{align*}
  These divisors satisfy $\dim |L_5| = 2$. In particular, the intersection of the $\P^4_{L_5}$ spanned by $p$, $i(p)$, $q$, $i(q)$ and $r$, for $p, q \in C$ with $\P^{10}_N$ is a line $m$ in $\P^{10}_N$. The line $l_1$ (resp. $l_2$) spanned by $p$ and $i(p)$ (resp. $q$, $i(q)$) intersects $\Gamma$ in a point $\widetilde{p}$ (resp. $\widetilde{q}$). In particular, the line $m$ is secant to $\Gamma$ and passes through  $\widetilde{p}$ and $\widetilde{q}$. Since every point of $\Gamma$ comes as an intersection of a secant line in $C$ with $\P^{10}_N$, we obtain the following description of the base locus of $\pPN$:
\begin{proposition}
  Let $C$ be a curve of genus $g = 6$. Then, the base locus of the restriction map $\pPN$ is the ruled 3-fold $\Sec^1(\Gamma)$.
\end{proposition}

\bibliography{mibib}
\bibliographystyle{plain}
  
\end{document}